\documentclass{amsart}



\usepackage{color}

\usepackage{emptypage} 

\makeindex 

\setlength{\topmargin}{0.0in}
\setlength{\oddsidemargin}{0.20in}
\setlength{\evensidemargin}{0.20in}
\setlength{\textwidth}{6.2in}
\setlength{\textheight}{8.4in}%
\linespread{1.2}
\pagebreak
\baselineskip 10pt



%
%
%
%

\newtheorem{thm}{Theorem}

\newtheorem{lem}[thm]{Lemma}

\newtheorem*{theorem*}{Theorem}
\theoremstyle{definition}

\newtheorem{defn}[thm]{Definition}
\newtheorem{remk}[thm]{Remark}
\newtheorem{remks}[thm]{Remarks}

\newtheorem{exm}[thm]{Example}
\newtheorem{exms}[thm]{Examples}
\newtheorem{notat}[thm]{Notation}

%
{\hfill$\square$\end{defn}}
{\hfill$\square$\end{remk}}
{\hfill$\square$\end{remks}}
{\hfill$\square$\end{exm}}
{\hfill$\square$\end{exms}}
{\hfill$\square$\end{notat}}

\newcommand{\minus}{\scalebox{0.75}[1.0]{$-$}}

\renewcommand{\ker}{{\rm{ker}}}

\newcommand{\tuborg}{\left\{\begin{array}{ll}}
\newcommand{\sluttuborg}{\end{array}\right.}

\newcommand{\Tight}{\textit{Tight}}


\newcounter{elno}

\newcounter{elno-abc}   

\newcounter{elno-abc-prime}   

\usepackage[english]{babel}
\usepackage[utf8x]{inputenc}
\usepackage{amsmath}
\usepackage{graphicx}
\usepackage[colorinlistoftodos]{todonotes}

\title{Classification of tight contact structures on some Seifert fibered manifolds}
\author{Tanushree Shah}

\begin{document}
\begin{abstract}
We classify tight contact structures with zero Giroux torsion on some Seifert-fibered manifolds with four exceptional fibers. We get the lower bound by constructing contact structures using Legendrian surgery. We use convex surface theory to obtain the upper bound.
\end{abstract}

\maketitle

\section{Introduction} 

Contact geometry has been central to many advances in low-dimensional topology. For instance, it played a key role in Kronheimer and Mrowka's proof that all non-trivial knots satisfy property P (where non-trivial surgery yields a manifold with a non-trivial fundamental group) [32]. It also underpinned Ozsváth and Szabó's results showing that the unknot, trefoil, and figure-eight knots are uniquely determined by Dehn surgery [2, 37, 38], among other applications. Martinet proved in \cite{Mar} that every closed, oriented 3-manifold admits a contact structure, prompting the study and classification on a 3-manifold. Bennequin divided the class of contact structures on a 3-manifold into overtwisted and tight in \cite{Ben}. The overtwisted ones are in a 1-1 correspondence with homotopy classes of tangent planes on a 3-manifold, proved by Eliashberg in \cite{Eli}, establishing a link to the topology of the manifold. \

\ 

The emphasis shifted to tight contact structures, which turned out to be far more nuanced. Etnyre and Honda established the non-existence of tight contact structures on certain Seifert fibered manifolds \cite{EH}, though their existence remains an open problem in many cases. Classification efforts, when existence is known, primarily target prime atoroidal manifolds, as tight structures respect the connected sum decomposition of 3-manifolds, while essential tori can generate infinitely many non-isotopic tight structures. Consequently, initial classifications were often restricted to atoroidal manifolds. Tight contact structures have been classified on $S^3$ \cite{Eli03}, lens spaces \cite{Hon01}, and most Seifert fibered spaces with three exceptional fibers \cite{GLS, GLS01, Mat, Wu}.

\ 

The focus on toroidal manifold came with the classification of tight structures on 3-torus, by Kanda \cite{Kan} and Giroux \cite{Gir01} (independently) using the theory of characteristic foliations and convex surfaces. Recently, Simone used twisted Heegaard Floer contact invariant to classify tight structures on some toroidal plumbed 3-manifolds \cite{Sim}.\ 

\ 

We build on the results above to classify tight contact structures on some toroidal Seifert fiberd manifolds with four exceptional fibers with zero Giroux torsion. For the Seifert fibered manifold $M(g,e_0;p_1/q_1,\allowbreak...,p_4/q_4)$, where $g$ is the genus of the base surface $B$, and $e_0\in \mathbb{Z}$, $\frac{p_i}{q_i}\in (0,1)\cap \mathbb{Q}$. 
We denote the continued fraction expansion of $-\frac{q_i}{p_i}$ by $[a_0^i,a_1^i,...,a_{m_i}^i]$ with all $a_{i}<-1$ integers. The surgery diagram corresponding to $M(0,e_0;p_1/q_1,...,p_4/q_4)$ is shown in the Figure $\ref{lg}$.

\

 We first look at an example case in detail before proving the general result. We look at the tight contact structures on $M(0, -4;1/2,1/2,1/2,1/2)$. Once we have calculated the tight contact structures on this example case, it is computationally easy to generalise to manifolds $M=M(e_0,0;p_1/q_1,p_2/q_2,\allowbreak p_3/q_3,p_4/q_4)$ with $e_0(M)\leq-4.$  
\begin{thm}
\label{general thm}

 Let $M=M(e_0,0;p_1/q_1,p_2/q_2,\allowbreak p_3/q_3,p_4/q_4)$  where $e_0\leq -4$ and $\frac{p_i}{q_i}\in (0,1)\cap \mathbb{Q}$ and $gcd(p_i,q_i)=1$. On $M$ there are exactly $|(e_0(M)+1)\Pi_{i=1}^4\Pi_{j=1}^{m_i}(a_j^i+1)|$ tight contact structures with zero Giroux torsion up to contact isotopy. All of these can be constructed by Legendrian $-1$ surgery and hence are Stein fillable. For each $n\in\mathbb{Z}$ there exists at least one tight contact structure with n-Giroux torsion on $M$. These tight contact structures are not weakly fillable. 
\end{thm}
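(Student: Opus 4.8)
The plan is to follow the strategy already executed in the example $M(0;-1/2,-1/2,-1/2,-1/2)$ of \thmref{example thm}: obtain the lower bound from an explicit Legendrian $-1$ surgery construction and the matching upper bound from convex surface theory, the only change being that the combinatorics are now driven by the full continued fraction expansions $[a_0^i,\dots,a_{m_i}^i]$ of $-q_i/p_i$ instead of the single value $a_0^i=-2$. Throughout, I decompose $M$ along the four vertical tori $T_i=\partial V_i$, where $V_i$ is a standard fibered neighborhood of the $i$-th exceptional fiber, writing $M=V_1\cup V_2\cup V_3\cup V_4\cup N$ with background $N\cong\Sigma_{0,4}\times S^1$ fibering over the four-punctured sphere. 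The zero Giroux torsion hypothesis lets me take every $T_i$ convex with exactly two dividing curves of an efficient slope.

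For the lower bound I present $M$ as Legendrian $-1$ surgery on the star-shaped plumbing graph with a central unknot of framing $e_0(M)$ and four chains carrying the weights $a_1^i,\dots,a_{m_i}^i$; here the $a_0^i$ are absorbed into the central framing since $e_0(M)=\sum_{i}a_0^i$. Since $e_0\le-4$ and every $a_j^i\le-2$, each weight $w$ occurring in the diagram satisfies $w\le-2$, so the corresponding unknot can be Legendrian realized with $tb=w+1$, and its rotation number is then free to take any of the $|w+1|$ admissible values $\{w+2,w+4,\dots,-(w+2)\}$. The product of these choices over the central vertex and all chain vertices is exactly $|(e_0(M)+1)\prod_{i=1}^{4}\prod_{j=1}^{m_i}(a_j^i+1)|$, and because every surgery coefficient is at most $-1$, each diagram is Legendrian $-1$ surgery on a Stein fillable manifold and so yields a Stein fillable, hence tight, contact structure by Eliashberg's theorem. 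I then separate these by their homotopy-theoretic data (the $d_3$ invariant and induced $\mathrm{Spin}^{c}$ structure), equivalently by the distinct relative Euler numbers their convex models induce on the $T_i$ and on $N$; this gives at least $|(e_0(M)+1)\prod_{i,j}(a_j^i+1)|$ of them.

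For the upper bound I run the convex surface argument of the example case. After making the $T_i$ convex I apply Honda's classification of tight contact structures on solid tori \cite{H1} to each $V_i$: once the boundary slope of $T_i$ is fixed, and after shaving off the $a_0^i$ layer that is pushed into $N$, the number of tight structures on $V_i$ is $\prod_{j=1}^{m_i}|a_j^i+1|$. It remains to bound the tight structures on $N\cong\Sigma_{0,4}\times S^1$ with its now-fixed convex boundary; the hypothesis $e_0\le-4$ is exactly what forces the dividing set to be simple enough that, after summing over the finitely many admissible boundary slopes, $N$ contributes the factor $|e_0(M)+1|$, in agreement with the circle-bundle count of \cite{H2}. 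Multiplying the two contributions gives the upper bound $|(e_0(M)+1)\prod_{i,j}(a_j^i+1)|$, which matches the lower bound, so the count is exact and all tight structures are Stein fillable.

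The Giroux torsion statement is produced as in \thmref{example thm}: inserting $n$ full Giroux torsion along one of the incompressible vertical tori $T_i$ gives, for each $n\in\mathbb{Z}^+$, a tight contact structure with $n$-torsion, and along such an incompressible torus positive torsion obstructs weak fillability, so none of these is weakly fillable. I expect the main obstacle to be the upper-bound analysis of the background piece $N$: controlling the admissible dividing configurations on $\Sigma_{0,4}\times S^1$ and the bypass moves between them, so as to rule out extra tight structures and to confirm that the discretization of boundary slopes neither over- nor under-counts. This is the delicate step, and it is precisely where the hypothesis $e_0\le-4$, keeping the background in the negative non-degenerate regime, is essential.
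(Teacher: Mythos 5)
Your overall strategy coincides with the paper's (Legendrian surgery plus Lisca--Mati\'c for the lower bound, convex decomposition along the vertical tori for the upper bound), and the lower-bound half is essentially correct. But the heart of the upper bound --- that the background piece contributes exactly the factor $|e_0(M)+1|$ --- is asserted rather than proven, and you yourself flag it as ``the main obstacle.'' That is precisely where the paper's work lies, and two concrete steps are missing. First, you never justify how the boundary slopes of the $T_i$ get pinned down: in the general case the paper does \emph{not} get this from a naive imbalance-principle thickening but from the fact that $M$ is an L-space, so that (by Corollary 5.2 of \cite{G}) the three-fiber piece already contains a Legendrian curve of twisting number $-1$; annuli from that curve to the $\partial(\Sigma\times S^1)_i$ then supply the bypasses that bring each slope to $\lfloor q_i/p_i\rfloor$. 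Second, once the slopes are normalized, the paper performs a section change (Prop.\ 2.1 of \cite{Hat}) to boundary slopes $(s,0,0,0)$ with $s=\sum_i\lfloor q_i/p_i\rfloor$, splits $Y$ into a slope-$(0,0,0,0)$ shirt (three tight structures, from the example case) glued to a toric annulus with slopes $0$ and $s$ ($s+1$ structures by \cite{H1}), observes that sign-matching on the gluing torus cuts the $3s+3$ combinations down to $2s+3$, and then shows by a further section-change argument that $s$ of the remaining configurations occur in isotopic pairs, landing on $s+3=|e_0(M)+1|$. Without that last identification your upper bound would be $2s+3$ and would not match the lower bound; nothing in your sketch addresses it, and there is no reason the dividing-set count on $\Sigma_{0,4}\times S^1$ ``in agreement with the circle-bundle count of \cite{H2}'' comes out to $|e_0(M)+1|$ for free.

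There is also an error in the fillability claim: positive Giroux torsion does \emph{not} obstruct weak fillability in general (the weakly fillable tight structures on $T^3$ and on many torus bundles carry arbitrary torsion); what Gay's theorem \cite{Gay} rules out is \emph{strong} fillability. The paper's argument needs the additional observation that $M$ is a rational homology sphere, so that a weak filling can be promoted to a strong one \cite{Ge}, and only then does nonzero torsion give a contradiction. As written, your one-line justification of non-weak-fillability is false as a general principle, so this step needs the QHS detour to be correct.
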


We start by constructing tight contact structures with zero Giroux torsion by Legendrian surgery to get a lower bound on the number of tight contact structures. These tight contact structures might be isotopic. To distinguish non isotopic contact structures we use Lisca-Mati\'c's result in \cite{LM} which uses Chern numbers and Stein structures. To get the upper bound on the number of tight contact structures we use convex surface theory for which we decompose our manifold in $4$ solid tori, toric annulus, and $2$ circle bundle over \{pair of pants\} as shown in Figure \ref{U1}. We start by maximizing the twisting numbers of the exceptional fibers by attaching bypasses. Then we use Honda's classification of tight contact structures on each of the pieces \cite{Hon02, Hon01}. We then glue these pieces together to construct $M$. It is possible to get an overtwisted contact structure when we glue two pieces with tight contact structures; we identify and discard such a combination. We look at the dividing curves on the convex surfaces we glue together to find overtwisted disks. Amongst the remaining combinations, we need to identify the isotopic ones from the others. We use relative Euler class \cite{Etnconvex} to identify non isotopic tight contact structures. \ 

\

 In Section \ref{2} we briefly look at the methods we use to get our lower and upper bound. Then we look at a couple of classification theorems that we use to classify tight contact structures on our Seifert fiberd manifolds. We give the proof of Theorem \ref{general thm}, in a special case in detail for better understanding and then in full generality in Section \ref{4}.\ 
 
\ 

It came to the author's notice 6 months before submission that Elif Medetogullari had similar results which were never published. 

\section{Preliminaries}
\label{2}
We expect the reader to be familiar with convex surface theory and contact surgery at the level of  \cite{Hon01} and \cite{Etn01}. We will only consider positively co-oriented contact structures, that is, we consider contact structures which satisfy the condition $\alpha\wedge d\alpha> 0.$ We recall a few theorems that we use in our proof here. 






\

A contact manifold $M$ is called $\textit{holomorphically fillable}$ if it is the oriented boundary of a compact Stein surface. It is proved by Eliashberg and Gromov that holomorphically fillable structures are tight \cite{EG}. 
\begin{thm}(Eliashberg)
\cite[Theorem 1.3]{Gom}
\label{HF}
	If $(M',\xi')$ is a contact manifold, obtained from a holomorphically fillable contact manifold ($M,\xi$) by Legendrian surgery then $(M',\xi')$ is holomorphically fillable.
\end{thm}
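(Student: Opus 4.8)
The plan is to realize Legendrian surgery as the attachment of a Stein $2$-handle and then to show that attaching such a handle to a Stein domain along a Legendrian knot in its boundary produces again a Stein domain, whose boundary is the surgered contact manifold. Recall that $(M,\xi)$ holomorphically fillable means it is the oriented boundary of a compact Stein surface $W$: a compact complex surface carrying a strictly plurisubharmonic exhausting function $\phi\colon W \to \R$ for which $M$ is a regular level set and $\xi = TM \cap J(TM)$ is the field of complex tangencies induced on $\del W$. Write $L \subset M$ for the Legendrian knot along which we surger; performing Legendrian surgery means attaching a $4$-dimensional $2$-handle along $L$ with framing $\mathit{tb}(L)-1$, one less than the contact (Thurston--Bennequin) framing.

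First I would construct the standard local model. In $\C^2$ with its standard complex structure and plurisubharmonic function $\psi = |z_1|^2+|z_2|^2$, a neighborhood of a Legendrian arc inside a regular level set of $\psi$ is contactomorphic to a neighborhood of the standard Legendrian arc in $(S^3,\xi_{std})$; this lets me put $\phi$ into a normal form near $L$. Eliashberg's key construction then provides an explicit strictly plurisubharmonic function on the $2$-handle $H=D^2\times D^2$ with a single index-$2$ critical point whose descending (core) disk meets $\del H$ along the attaching circle, and with the handle attaching along $L$ with framing exactly $\mathit{tb}(L)-1$. The point is that $J$-convexity is what forces this framing: the complex tangencies along the attaching sphere are rigid, so the Legendrian surgery framing is the only one for which $\phi$ can be extended over $H$ while keeping the complex Hessian positive definite.

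Next I would glue. After a contact isotopy, $L$ may be assumed to lie in the standard model inside a collar of $M$, so that the local handle $H$ matches the collar. Extending $\phi$ across $H$ by the model function and smoothing near the attaching region yields a compact complex surface $W' = W \cup_L H$ together with a global strictly plurisubharmonic function $\phi'$, i.e.\ a compact Stein surface. By construction the underlying smooth $4$-manifold is $W$ with a $2$-handle attached along $L$ with framing $\mathit{tb}(L)-1$, so $M' := \del W'$ is exactly the smooth manifold obtained by Legendrian surgery, and the contact structure $\xi' = TM' \cap J(TM')$ cut out on $M'$ agrees with the one produced by Legendrian surgery on $(M,\xi)$. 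Hence $(M',\xi')$ bounds the compact Stein surface $W'$ and is holomorphically fillable.

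The hard part is the local $J$-convexity analysis across the handle: one must check that the model plurisubharmonic function on $H$ keeps its complex Hessian positive definite along the core and throughout the attaching region, and verify that the framing singled out by preserving $J$-convexity is precisely $\mathit{tb}(L)-1$. This is the content of Eliashberg's lemma; Gompf's handle-calculus description of Stein surfaces gives a convenient packaging of the same computation. Once this local step is granted, the gluing and the identification of $(M',\xi')$ with the Legendrian-surgered contact manifold are essentially formal.
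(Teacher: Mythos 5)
The paper itself does not prove this statement: Theorem \ref{HF} is quoted as background directly from Eliashberg \cite{E2}, so there is no internal proof to compare against. Your outline is precisely the standard argument behind that cited theorem---realizing Legendrian surgery as attachment of a Stein ($J$-convex/Weinstein) $2$-handle along $L$ with framing ${\rm tb}(L)-1$, extending the plurisubharmonic function across the handle, and identifying the new boundary with the surgered contact manifold---with the one genuinely hard step (Eliashberg's local $J$-convexity lemma for the handle model, as packaged by Gompf) correctly isolated, so it is a faithful and correct sketch of the proof of the result the paper invokes.
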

 It is proved by Eliashberg in \cite{Eli02} that $S^3$ with its standard contact structure is holomorphically fillable.  Using Theorem \ref{HF} we can construct tight contact structures by Legendrian surgery on knots in $S^3$ with standard contact structure. The following theorem determines when two such tight contact structures are isotopic.

\begin{thm}
\cite[Theorem 1.2]{LM01}
\label{LM} 
Let X be a smooth 4-manifold with a boundary. Suppose $J_1$ and $J_2$ are two Stein structures with boundary on X. Let $c(J)$ be the Chern class of the Stein structure $J$. If the induced contact structures on $\partial X$ are isotopic, then $c(J_1)=c(J_2)$.\
\end{thm}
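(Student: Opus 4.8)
The plan is to reduce the statement to the nonvanishing of Seiberg--Witten invariants on a closed K\"ahler surface, following the strategy of Lisca and Mati\'c. Write $\xi_i$ for the contact structure induced on $M=\partial X$ by the field of complex tangencies of $J_i$, and recall that the almost complex structure underlying $J_i$ gives $c(J_i)=c_1(TX,J_i)\in H^2(X;\mathbb{Z})$, whose restriction to the boundary is $c(J_i)|_M=c_1(\xi_i)$. Since $\xi_1$ and $\xi_2$ are isotopic, $c(J_1)$ and $c(J_2)$ already agree after restriction to the boundary; the entire content of the theorem is to promote this to an equality in $H^2(X;\mathbb{Z})$, that is, to control the ``interior'' part of the two Chern classes.

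First I would cap off the Stein filling. A Stein domain is in particular a strong symplectic (convex) filling of its contact boundary, so I invoke the embedding theorem of Lisca--Mati\'c: $(X,J_1)$ admits a K\"ahler embedding into a closed minimal complex surface $Z$ of general type with $b_2^+(Z)>1$. Setting $C:=Z\setminus\mathrm{int}(X)$, the complement is a concave symplectic cap for $(M,\xi_1)$, carrying an almost complex structure agreeing with $J_1$ along $M$. Because $\xi_2$ is isotopic to $\xi_1$, the same cap $C$ glues to $(X,J_2)$: using an ambient isotopy realizing $\xi_2\cong\xi_1$, I form the closed symplectic $4$-manifold $Z'=(X,J_2)\cup_M C$. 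The key point is that $Z'$ is diffeomorphic to $Z$, since both are obtained by gluing the same two smooth pieces $X$ and $C$ along $M$ by isotopic identifications; thus I obtain a single smooth closed $4$-manifold carrying two symplectic (hence almost complex) structures that restrict to $J_1$, respectively $J_2$, on the common piece $X$ and agree on $C$.

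Next I would extract the Chern classes from gauge theory. By Taubes' nonvanishing theorem, the canonical $\mathrm{Spin}^c$ structure of a closed symplectic $4$-manifold with $b_2^+>1$ has Seiberg--Witten invariant $\pm 1$; applied to the two symplectic structures on $Z\cong Z'$, this shows that $-c_1(Z,J_1)$ and $-c_1(Z',J_2)$ are both Seiberg--Witten basic classes of the single smooth manifold $Z$. Since $Z$ is minimal of general type, Witten's computation identifies its basic classes as exactly $\pm K_Z$, where $K_Z=-c_1(Z,J_1)$. Hence $c_1(Z',J_2)=\pm c_1(Z,J_1)$, and the sign is pinned down by the fact that the canonical class of a symplectic structure pairs positively with the symplectic class, forcing $c_1(Z',J_2)=c_1(Z,J_1)$. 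Restricting this equality to the piece $X$ and using that both global classes restrict to $c(J_i)$ there, I conclude $c(J_1)=c(J_2)$.

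The hard part is the gauge-theoretic input rather than the bookkeeping: it is the Lisca--Mati\'c embedding of an arbitrary Stein surface into a closed minimal K\"ahler surface of general type with $b_2^+>1$, together with Taubes' identification of the Seiberg--Witten invariant of a symplectic manifold and Witten's determination of the basic classes of minimal general-type surfaces. Everything else --- the observation that isotopic contact boundaries allow one to reuse a single cap, and the Mayer--Vietoris-style restriction argument --- is formal once this machinery is in place. I would also take care with the orientation and sign conventions, so that ``positive pairing with the symplectic class'' genuinely selects the correct representative among $\pm K_Z$.
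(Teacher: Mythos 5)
The paper offers no proof to compare against: Theorem~\ref{LM} is imported verbatim from Lisca--Mati\'c \cite{LM} and used as a black box. Judged against the published argument, your sketch is essentially the genuine article. The embedding of the Stein domain as a K\"ahler domain in a closed minimal surface $Z$ of general type with $b_2^+>1$, the reuse of the complementary cap $C$ for the second Stein structure via Gray stability applied to the isotopy $\xi_1\simeq\xi_2$, and the identification of both canonical classes as Seiberg--Witten basic classes of the single smooth manifold $Z\cong Z'$ (Taubes' nonvanishing plus the fact that a minimal general type surface with $b_2^+>1$ has basic classes exactly $\pm K_Z$) is precisely the Lisca--Mati\'c strategy, and the restriction to $X$ at the end is the right bookkeeping.

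The one step you should not wave at is the final sign. From the basic-class computation you only get $c_1(Z',J_2)=\pm\,c_1(Z,J_1)$, and the positivity you invoke does not immediately finish the job: Taubes--Liu gives $K_{\omega'}\cdot[\omega']\ge 0$ since $b_2^+>1$ excludes rational and ruled surfaces, but $[\omega']$ need not equal $[\omega]$, so the sign of $K_Z\cdot[\omega']$ is not a priori determined and $K_{\omega'}=-K_Z$ is not yet excluded. A cleaner way to pin the sign, consistent with the source, is to note that the two canonical $\mathrm{Spin}^c$ structures restrict to the same $\mathrm{Spin}^c$ structure on the cap $C$ (the almost complex structures agree there), so the wrong sign would force $2\,K_Z|_C=0$ in $H^2(C;\mathbb{Z})$; one then arranges the embedding so that $K_Z|_C$ is not $2$-torsion. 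You flagged the sign convention issue yourself, which is the right instinct; with that repair the argument is complete.
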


Now we will state some classification results that we use to get the upper bound on the number of tight contact structures on Seifert fibered manifolds. One uses convex surface theory to obtain these results. We refer the reader to \cite{Gir02, Hon01, Hon02} for the basics of convex surface theory. Now we set notations as follows. Let $M$ be a 3-manifold with tight contact structures $\xi$. Let $\Sigma$ denote embedded (either closed or has Legendrian boundary) oriented surface in $M$. We denote the set of dividing curves on $\Sigma$ by $\Gamma_\Sigma$. The term $\textit{sign configuration}$ on a convex surface means the signs of the regions in the complement of dividng curves. Notice that when we say dividing set we mean the dividing curves and the sign configuration on $\Sigma$. Let $T^2$ be a minimal convex torus in standard form. The term minimal is used here to denote that the convex torus has $2$ dividing curves. Throughout the paper we will decompose and attach bypasses along minimal convex torus only and so we will omit the word minimal. We denote the slope of its dividing curves by $s$ or $s(T^2)$ when we want to specify the torus and we call it the slope of the torus. Say we have a 3-manifold $M$ with a convex torus as its boundary. By boundary slope, we refer to the slope of the dividing curves on the boundary torus and denote it by $s(\partial M)$. If our manifold has multiple boundary tori, we index them by $\partial(M)_i$ and denote the slopes by $s(\partial(M)_i)$. The slope of the Legendrian ruling is denoted by $r$. Let $\gamma\in M$ be a Legendrian curve. We denote the twisting number of a curve $\gamma$ by $t(\gamma)$. and the standard tubular neighborhood of $\gamma$ by  $N(\gamma)$. \ 

\ 

Here we state some results that we use to get the upper bound on the number of tight contact structures. Eliashberg gave the classification of tight contact structures on the 3-ball $B^3$.
\begin{thm} \cite[Theorem 2.1.3]{Eli03}
\label{B3}
Two tight contact structures on the ball $B^3$ which coincide at $\partial B^3$ are isotopic relative to $\partial B^3$.
\end{thm}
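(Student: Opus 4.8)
The plan is to reduce the global statement to a purely local (Darboux) uniqueness, after first standardizing both contact structures near the boundary; the role of tightness enters essentially through Giroux's convexity criterion. First I would set up the reduction. Since $\xi_0$ and $\xi_1$ coincide along $\partial B^3 = S^2$ as plane fields, they induce the \emph{same} characteristic foliation on $S^2$. By Gray stability it suffices to join $\xi_0$ and $\xi_1$ by a smooth path of contact structures that agree on $\partial B^3$, since such a path integrates to an ambient isotopy of $B^3$ fixing $\partial B^3$ pointwise. After a $C^\infty$-small perturbation supported in the interior, I may assume the boundary sphere is convex, so that a dividing set $\Gamma_{S^2}$ is defined for each structure.

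The second step is to pin down $\Gamma_{S^2}$ and standardize the germ near the boundary. Here tightness is used for the first time: by Giroux's criterion, in a tight structure a convex $S^2$ cannot carry a contractible dividing curve cutting off a disk free of other dividing curves, since such a curve would bound an overtwisted disk; hence $\Gamma_{S^2}$ is a single circle for both $\xi_0$ and $\xi_1$. By Giroux's flexibility theorem, the germ of a tight contact structure along a convex surface is determined up to isotopy rel the surface by its dividing set alone. As the two dividing sets coincide (one circle), after an isotopy supported near $S^2$ and fixing $\partial B^3$ I may arrange $\xi_0 = \xi_1$ on a collar $S^2 \times [0,1)$ of the boundary.

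The hard part will be the interior. Having made the structures agree on a collar, I would cut $B^3$ along a properly embedded convex disk $D$ with Legendrian boundary, chosen to meet the collar in the already-standardized region. The main obstacle is controlling $\Gamma_D$: a priori the dividing set of $D$ could be intricate, but tightness --- applied through the Thurston--Bennequin inequality to $\partial D$ --- forces $\Gamma_D$ to be a single boundary-parallel arc, the only configuration compatible with a tight ball. Cutting along such a disk and rounding edges splits $(B^3,\xi_i)$ into balls of strictly smaller dividing complexity with convex boundary, and iterating reduces each structure to the standard ball, on which uniqueness is immediate from Darboux's theorem. Reassembling the local isotopies, which agree along the cutting loci because both structures were already standardized there, yields the desired isotopy rel $\partial B^3$. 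The crux throughout is that every appeal to ``single circle'' or ``single arc'' rests on tightness; for an overtwisted structure these steps fail, which is precisely why infinitely many non-isotopic overtwisted structures can share the same boundary behaviour.
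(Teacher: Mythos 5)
The paper does not actually prove this statement: it is Eliashberg's theorem and is simply quoted from \cite{E3}, so the only question is whether your argument stands on its own. It does not; there is a genuine gap at the final and decisive step. Your preliminary reductions are all standard and essentially correct: perturbing to a convex boundary, invoking Giroux's criterion to force $\#\Gamma_{S^2}=1$ (on $S^2$ the criterion is exactly that the dividing set is a single circle), using flexibility to standardize the germ along the boundary, and choosing a properly embedded convex disk $D$ with Legendrian boundary realizing $tb(\partial D)=-1$, so that $\Gamma_D$ is a single arc. The problem is the induction you then run. Cutting a ball whose boundary sphere carries a single dividing circle along such a disk and rounding edges produces \emph{two} balls, each of whose boundary spheres again carries a single dividing circle: the ``dividing complexity'' does not decrease, the iteration never terminates, and there is no base case to land on. Moreover, the claimed base case --- ``uniqueness is immediate from Darboux's theorem'' --- is false: Darboux's theorem gives a local normal form near a point, not uniqueness of a contact structure on a compact ball relative to prescribed boundary data. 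That latter statement is precisely the theorem you are trying to prove. Indeed, Eliashberg's $B^3$ theorem is the irreducible base case on which every convex-decomposition argument (including all of those in this paper) ultimately rests, so it cannot itself be obtained by such a decomposition; Eliashberg's proof instead normalizes the characteristic foliations on a family of concentric spheres foliating the punctured ball, and later proofs (e.g.\ Giroux's bifurcation analysis of movies of characteristic foliations) are likewise independent of convex cutting.

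Two smaller points. First, Gray stability rel boundary requires the path of contact structures to be constant on a \emph{neighborhood} of $\partial B^3$, not merely along $\partial B^3$ itself, in order for the generating vector field to vanish near the boundary; your collar standardization would supply this, but you should say so. Second, even if your cutting scheme worked, reassembling isotopies of the pieces yields a contactomorphism of $(B^3,\xi_0)$ to $(B^3,\xi_1)$ supported rel the cutting loci, and an additional argument is needed to promote this to an isotopy of contact structures rel $\partial B^3$; you never construct the path of contact structures that your opening appeal to Gray stability presupposes.
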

Now let us look at some classification results on $S^1\times D^2$ and on $T^2\times I$ which will be used in the classification of tight contact structures on Seifert fibered manifolds.



 First, we establish some notations.
Let $\frac{p_i}{q_i}\in (0,1)\cap \mathbb{Q}$ and $(p, q)=1$ we have the following unique continued fraction expansion:
$$
-\displaystyle\frac{q}{p}=a_{0}-\displaystyle\frac{1}{a_1-\displaystyle\frac{1}{a_{2}\cdots -\displaystyle\frac{1}{a_{k}}}},
$$
with all $a_{i}<-1$ integers. We identify $-\frac{q_i}{p_i}$ with $[a_0,\cdots,a_k]$.

\begin{thm} \cite[Theorem 2.3]{Hon01}\label{sd} 
Consider the tight contact structures on $S^1 \times D^2$ with convex boundary $T^2$, for which $\# \Gamma_{T^2}=2$ and $s\left(T^2\right)=-\frac{q}{p}$. There exist exactly $\left|\left(a_0+1\right)\left(a_1+1\right) \cdots\left(a_{k-1}+1\right)\left(a_k\right)\right|$ tight contact structures on $S^1 \times D^2$ with this boundary condition, up to isotopy fixing $T^2$.
\end{thm}
Now let us look at tight contact structures on $T^2\times I$.
We denote the slope of dividing curves on $T^2\times \{0\}$ by $s_0$ and the slope of dividing curves on $T^2\times \{1\}$ by $s_1$. We can assume that $s_0$ is $-1$. We write $s_1=-\frac{q}{p}$.\

\

The statement of classification of tight contact structures on $T^2\times I$ is in terms of the continued fraction expansion of the boundary slope.
\begin{thm} \cite[Theorem 2.2 (2a)]{Hon01} (Minimally twisting, rotative case)
\label{m}
 Let $\Gamma_{T_{i}}$, $i=0,1$, satisfy $\#\Gamma_{T_{i}}=2$ {\it and} $s_{0}=-1, s_{1}=-\frac{q}{p}$. Then
$|\pi_{0}(Tight^{\min}(T^{2}\times I,\ \Gamma_{T_{1}}\cup\Gamma_{T_{2}}))|\leq|(a_{0}+1)(a_{1}+1)\cdots(a_{k-1}+1)(a_{k})|$.

\end{thm}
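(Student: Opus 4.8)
The plan is to prove the bound by factoring a minimally twisting $T^2\times I$ into a stack of \emph{basic slices} and counting the resulting gluing data up to the isotopies that permute commuting slices; since we only want an upper bound, we need not check that the structures so produced are pairwise non-isotopic. First I would normalize: after acting by an element of $\SL_2(\Z)$ on the slopes we keep $s_0=-1$, $s_1=-p/q$, and I would fix the continued fraction $-p/q=[a_0,\dots,a_k]$ with each $a_i<-1$ (so that $r_i=a_i$ in the statement). This data singles out a distinguished path in the Farey tessellation joining the vertex $-1$ to the vertex $-p/q$; because the tori are minimally twisting, every convex torus parallel to the boundary has dividing slope lying on the arc between $s_0$ and $s_1$, which is exactly what forces the decomposition to follow this path and forbids any ``backtracking'' slopes.

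The geometric input comes next. Using the Legendrian ruling on the convex boundary together with Giroux flexibility, the realization of convex tori in standard form, and the Imbalance Principle, I would produce a nested family of convex tori $T^2\times\{t\}$ whose dividing slopes realize the successive vertices along the path. Two such consecutive tori bound a basic slice, i.e. a minimally twisting $T^2\times I$ whose two boundary slopes span a unimodular edge of the tessellation; this exhibits the whole manifold as a composition of basic slices, the number of which equals the length of the Farey path and is read off from the $a_i$. I would then invoke the classification of a single basic slice: up to isotopy rel boundary it carries \emph{exactly two} tight contact structures, distinguished by the sign of the bypass (equivalently by the relative Euler class), the degenerate ``mixed'' gluing being overtwisted. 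Hence every tight structure on $T^2\times I$ is recorded by a finite sequence of signs, one per basic slice, giving at most $2^N$ candidates.

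To compress $2^N$ down to the stated product I would establish the \emph{shuffling relation}: when two adjacent basic slices have their three slopes in a common edge-fan of the tessellation (that is, they belong to one continued-fraction ``block''), the two orders in which they can be attached yield contact isotopic manifolds, so inside such a block only the \emph{number} of positive signs is an invariant. A block built from $m$ mutually shufflable slices therefore contributes $m+1$ isotopy classes instead of $2^m$. Grouping the basic slices block by block according to the coefficients $a_0,\dots,a_k$, and taking into account that adjacent blocks share a sign at their junction, I would obtain the factors $|a_i+1|$ from the interior blocks $0\le i<k$ and the factor $|a_k|$ from the terminal block, whose product is $|(a_0+1)\cdots(a_{k-1}+1)a_k|$, as claimed.

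The step I expect to be the main obstacle is this last piece of bookkeeping. Proving the shuffling relation itself requires a careful bypass-sliding argument in convex surface theory showing that reordering basic slices within a block does not alter the isotopy class; and matching the combinatorics of the blocks to the continued fraction --- in particular explaining why each interior coefficient contributes $|a_i+1|=|a_i|-1$ while the last contributes the full $|a_k|$ --- is where the asymmetry of the formula is born and must be verified slice by slice. By contrast, the reductions in the first two paragraphs are routine applications of the convex surface machinery and of the basic slice classification, which in turn rests on the solid torus classification of Theorem~\ref{sd}.
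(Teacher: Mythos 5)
This theorem is not proved in the paper at all: it is quoted verbatim (as Theorem 2.2(2a) of Honda's \emph{On the classification of tight contact structures I}) and used as a black box, so there is no in-paper argument to compare against. Your sketch is a faithful reconstruction of Honda's actual proof: factor the minimally twisting $T^2\times I$ into basic slices along the shortest Farey path from $s_0$ to $s_1$, use the two-fold classification of a basic slice to get at most $2^N$ sign sequences, and then collapse each continued-fraction block of $m$ mutually shufflable slices to $m+1$ classes. One small correction to your bookkeeping paragraph: the asymmetry between the factors $|r_i+1|$ for $i<k$ and $|r_k|$ does not come from adjacent blocks ``sharing a sign at their junction'' (the blocks partition the basic slices and share nothing); it comes from the block sizes themselves, namely the path contributes $|r_i+2|$ slices to each interior block and $|r_k+1|$ slices to the terminal one, so shuffling gives $|r_i+2|+1=|r_i+1|$ and $|r_k+1|+1=|r_k|$ respectively. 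With that adjustment your outline is the standard and correct route; the genuinely hard ingredients you correctly flag are the shuffling lemma and the fact that the Farey-path factorization exists and is forced by minimal twisting.
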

\begin{thm}\cite[Theorem 2.2 (3)]{Hon01} (Minimally twisting, non-rotative case)
 Let $\Gamma_{T_i}$, i=0,1, satisfy $\#\Gamma_{T_i}=2$ and $s_0=s_1=-1.$ Then there exists a holonomy map $k:\pi_0(\Tight^{min}(T^2\times I, \Gamma_{T_1}\bigcup\Gamma_{T_2}))\rightarrow \mathbb{Z},$ which is bijective.
\end{thm}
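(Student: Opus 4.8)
The plan is to produce the bijection $k$ in three stages: first define an integer-valued \emph{holonomy} invariant by cutting along a convex vertical annulus and counting boundary-parallel dividing arcs; then prove this count is a complete isotopy invariant; and finally realize every value. Throughout I would lean on the Giroux flexibility lemma, the Legendrian realization principle, edge-rounding, and the uniqueness statements already available in Theorems \ref{B3} and \ref{sd}.

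To define the map, I would fix a simple closed curve $\gamma\subset T^2$ of some slope $r$ transverse to the dividing slope $-1$ (say $r=0$) and study the vertical annulus $A=\gamma\times I$. After isotoping $T^2\times\{0\}$ and $T^2\times\{1\}$ into standard form with Legendrian ruling of slope $r$, I would Legendrian-realize $\partial A$ and make $A$ convex. Tightness and the Giroux criterion forbid homotopically trivial closed dividing curves on $A$, so $\Gamma_A$ consists of the topologically forced through-arcs joining the two boundary tori (their number fixed by $|\gamma\cdot\Gamma_{T_i}|$) together with finitely many boundary-parallel arcs. The core structural claim is that in the minimally twisting case these boundary-parallel arcs are all nested on a single side and carry a single sign; granting this, I would set $k(\xi)$ to be their signed count, with positive arcs contributing $+1$, negative arcs $-1$, and $k=0$ when there are none, so that $k$ takes values in all of $\Z$. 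Well-definedness, i.e. independence of $\gamma$, of $r$, and of the convex representative of $A$, would follow from Giroux flexibility together with isotopy discretization, realizing any change of data as a sequence of bypass attachments that does not alter the signed count; this yields a map $k:\pi_0(\Tight^{min}(T^2\times I,\,\Gamma_{T_0}\cup\Gamma_{T_1}))\to\Z$.

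For injectivity, suppose $k(\xi_1)=k(\xi_2)$ and cut both structures along $A$. Cutting $T^2\times I$ along $\gamma\times I$ yields a solid torus $V\cong S^1\times D^2$, and after edge-rounding $\partial V$ is convex with a dividing set completely determined by $\Gamma_{T_0}$, $\Gamma_{T_1}$, and $\Gamma_A$, hence by the common value of $k$. Honda's classification of tight contact structures on the solid torus with fixed convex boundary (the uniqueness in Theorem \ref{sd} and its extension to arbitrary boundary slope) then shows that the two cut-open structures are isotopic rel boundary, and re-gluing along $A$, using Theorem \ref{B3} to pin down the structure on the gluing region, produces a contact isotopy $\xi_1\simeq\xi_2$. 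For surjectivity I would, for each $n\in\Z$, build an explicit model by gluing $|n|$ elementary non-rotative slices of the sign dictated by $n$ onto the $I$-invariant structure (which has $k=0$); each slice keeps every parallel convex torus at slope $-1$, so the glued structure remains minimally twisting, and it contributes one boundary-parallel arc of the prescribed sign to $\Gamma_A$, giving $k=n$. Tightness of each model would be verified either by embedding it in a known tight manifold or directly via the Giroux criterion applied to the cut-open solid torus.

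The main obstacle is the structural analysis underlying the definition: proving that minimal twisting genuinely forces all boundary-parallel dividing arcs on $A$ to share a single sign, so that no positive–negative cancellation is available without leaving the minimally twisting class. This is precisely where the hypotheses $s_0=s_1=-1$ and minimal twisting are indispensable, since allowing mixed signs would collapse the invariant to a quotient of $\Z$ rather than $\Z$ itself; the argument requires careful bypass bookkeeping (an imbalance-type analysis) to show that a configuration with opposite-sign arcs either cancels away or exhibits a convex torus of slope different from $-1$, contradicting minimal twisting. Once this is established, the combination of the cutting argument for injectivity and the stacking models for surjectivity delivers the claimed bijection $k$.
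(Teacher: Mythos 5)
The overall architecture you propose --- cut along a convex vertical annulus $A=\gamma\times I$, read off an integer from $\Gamma_A$, prove completeness by cutting to a solid torus and invoking the uniqueness results, and realize every integer by stacking elementary slices --- is exactly the shape of Honda's proof of this theorem (the paper itself only cites Honda and gives no proof). But the invariant you define is the wrong one, and this is a genuine gap rather than a cosmetic slip. With $\#\Gamma_{T_i}=2$, $s_0=s_1=-1$ and $\gamma$ of slope $r\neq -1$, each component of $\partial A$ meets the dividing set of its torus in exactly $2$ points, so $\Gamma_A$ consists of exactly two arcs. If even one of them were boundary-parallel, the Legendrian Realization Principle would hand you a bypass on, say, $T_0$, and since $\#\Gamma_{T_0}=2$ is already minimal, attaching it produces a convex torus parallel to the boundary whose slope differs from $-1$ --- contradicting minimal twisting. (You in fact observe this dichotomy in your final paragraph, but draw the wrong conclusion from it.) Hence in the minimally twisting non-rotative case there are \emph{no} boundary-parallel arcs on $A$ at all, your ``signed count'' is identically zero, and the map you define cannot surject onto $\Z$.

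The correct holonomy is read off from the two \emph{through-arcs} that $\Gamma_A$ therefore consists of: relative to the product ($I$-invariant) configuration, these arcs wind some integer number of times around the core of $A$, and that winding number (after fixing $\gamma$, the identification $T^2\cong\R^2/\Z^2$, and a reference section, since different choices shift $k$ by a constant) is $k(\xi)$. With that definition your remaining two steps do work and match Honda: cutting along $A$ yields $S^1\times D^2$, edge-rounding determines $\Gamma_{\partial(S^1\times D^2)}$ from the winding number, and the uniqueness of the tight structure on the solid torus with that boundary (Theorem \ref{sd}) gives injectivity; stacking $|n|$ non-rotative basic slices of the appropriate sign on the $I$-invariant layer adds $\pm 1$ to the winding of the through-arcs at each stage (not a boundary-parallel arc) and gives surjectivity, with additivity of holonomy under stacking doing the bookkeeping. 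So the fix is localized to the definition of $k$, but as written the central object of the proof does not distinguish any two of the structures it is meant to classify.
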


\subsection{Giroux torsion}
\label{torsion}

Say we have a contact manifold $(M,\xi)$. Say $T\in M$ is a convex torus in standard form. We say that $(M,\xi)$ has $\textit{n-torsion}$ along $T$ if there exists a contact embedding of ($T^2\times I, \xi_n =\ker(\sin(2n\pi z)dx+\cos(2n\pi z)dy)$) into $(M,\xi)$, such that $T^2\times\{t\}$ are isotopic to $T$. We say that $(M,\xi)$ has $\textit{n-Giroux torsion}$ if there exists an embedded torus $T$ along which $(M,\xi)$ has $n$-torsion and there does not exist any embedded torus $T'$ along which $(M,\xi)$ has ($n+1$)-torsion \cite{GH}.

A 3-manifold $M$ is called $\textit{irreducible}$ if every 2-sphere $S^2\subset M$ bounds a ball $B^3 \subset M$. A 2-sided surface $S$ without $S^2$ or $D^2$ components is called $\textit{incompressible}$ if for each disk $D \subset M$ with $D\cap S=\partial D$ there is a disk $D'\subset S$ with $\partial D'=\partial D$. An irreducible manifold $M$ is called $\textit{atoroidal}$ if every incompressible torus in $M$ is $\partial$-parallel otherwise, the manifold is called $\textit{toroidal}$. It is proved by Honda, Kazez, and Mati\'c:

\begin{thm}
 \cite[Theorem 0.2]{HKM}
\label{GT1}
Let $M$ be an oriented, closed, connected, toroidal irreducible 3-manifold that contains an incompressible torus. Then $M$ carries $\mathbb{Z}$ many isotopy classes of tight contact structures.
\end{thm}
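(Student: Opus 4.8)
The plan is to produce, for each $n\in\mathbb{Z}^{+}$, a tight contact structure $\xi_{n}$ on $M$ carrying $n$-torsion along a fixed essential torus, and then to distinguish these structures by their Giroux torsion. Since $M$ is toroidal it contains an incompressible torus $T$; as $M$ is closed and irreducible, $T$ is not boundary-parallel, so it is genuinely essential and persists under isotopy. First I would fix one universally tight contact structure $\xi_{0}$ on $M$ for which $T$ is isotopic to a convex torus in standard form with exactly two parallel dividing curves. Such a $\xi_{0}$ exists because every closed oriented $3$-manifold admits a tight contact structure, and one may arrange $T$ to be convex after a $C^{\infty}$-small isotopy; for the gluing step below it is convenient to take $\xi_{0}$ universally tight, which can be done by assembling it from universally tight pieces across the torus decomposition of $M$.

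Next I would insert Giroux torsion along $T$. Cutting $M$ along $T$ produces a manifold $M'$ with two boundary tori identified with $T$; regluing after inserting $n$ successive copies of the torsion model $\bigl(T^{2}\times I,\ \xi_{n}=\ker(\sin(2n\pi z)\,dx+\cos(2n\pi z)\,dy)\bigr)$ along a collar of $T$ yields a contact structure $\xi_{n}$ on $M$ that contains an embedded $n$-torsion layer in the sense of Subsection~\ref{torsion}. This modification is supported in an arbitrarily thin neighbourhood of $T$, so $\xi_{n}$ agrees with $\xi_{0}$ away from that collar and the boundary slopes presented to the inserted layer match those of $\xi_{0}$.

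The main obstacle is showing that every $\xi_{n}$ is tight. The crucial point is that $T$ is incompressible in the irreducible manifold $M$: this is exactly what prevents a compressing disk from being assembled into an overtwisted disk crossing the gluing region. Concretely I would invoke Colin's gluing theorem for incompressible convex tori, which asserts that gluing universally tight contact pieces along an incompressible convex (pre-Lagrangian) torus with matching dividing data again yields a universally tight contact structure. Since the torsion model is itself universally tight with convex boundary in standard form, and the complement is universally tight by the choice of $\xi_{0}$, each $\xi_{n}$ is universally tight, and in particular tight. This is the step where the hypotheses, irreducibility together with the essential torus, are indispensable; dropping incompressibility would allow the inserted torsion to create an overtwisted disk.

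Finally, to see that the $\xi_{n}$ are pairwise non-isotopic I would use that the Giroux torsion along the isotopy class of $T$ is an invariant of contact isotopy. By construction $\xi_{n}$ admits an $n$-torsion embedding along $T$, so its Giroux torsion is at least $n$, and hence the torsion is unbounded over the family $\{\xi_{n}\}$. A single isotopy class can account for only one value of this invariant, so $\{\xi_{n}\}_{n\in\mathbb{Z}^{+}}$ must contain infinitely many distinct isotopy classes of tight contact structures, which proves the theorem. I expect the tightness-preservation under gluing to be the genuine content; the construction and the invariance of Giroux torsion are comparatively formal.
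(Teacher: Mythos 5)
The paper does not actually prove Theorem~\ref{GT1}; it quotes it from Honda--Kazez--Mati\'c \cite{HKM1}, so your proposal has to be measured against the argument in the literature (\cite{HKM1}, and Colin \cite{Col}). Your overall strategy --- insert $n$ copies of the torsion model along an incompressible convex torus, preserve tightness via Colin's gluing theorem for universally tight pieces along incompressible tori, and separate the results by Giroux torsion --- is indeed the strategy of those papers. But there is a genuine gap at the first step: you justify the existence of the base structure $\xi_0$ by asserting that every closed oriented $3$-manifold admits a tight contact structure. That is false; Etnyre and Honda \cite{EH} (cited in the introduction of this very paper) exhibit a closed oriented $3$-manifold carrying no tight contact structure. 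What is true, and what is the actual content of \cite{HKM1}, is that a closed, oriented, irreducible, \emph{toroidal} manifold carries a universally tight contact structure that can be arranged so that the chosen incompressible torus is convex in standard form with controlled dividing set. Producing this $\xi_0$ requires convex decomposition theory (or taut foliations plus Eliashberg--Thurston perturbation); your parenthetical ``assembling it from universally tight pieces across the torus decomposition'' is precisely the hard step and is left unargued. Once $\xi_0$ is granted, the insertion and gluing steps are essentially as in the literature.

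A secondary issue is the final count. You argue that the torsion of $\xi_n$ is at least $n$, hence unbounded over the family, hence infinitely many isotopy classes. If only finitely many isotopy classes occurred among the $\xi_n$, then some single class would admit $n$-torsion embeddings for arbitrarily large $n$, i.e.\ would have infinite Giroux torsion in the sense of Section~\ref{torsion}; your argument tacitly excludes this, but finiteness of Giroux torsion for tight structures on closed manifolds is not free. To close this you should either show that $\xi_n$ has torsion exactly $n$ (via the classification of tight structures on $T^2\times I$, as in \cite{HKM1} and \cite{Col}) or distinguish the $\xi_n$ by another invariant, e.g.\ the twisted-coefficient contact invariants of \cite{GH}.
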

The converse was later proved by Colin, Giroux, and Honda:
\begin{thm}\cite[Theorem 0.2]{CGH}
\label{GT2}
Every closed, oriented, atoroidal 3-manifold carries a finite number of tight
contact structures up to isotopy.
\end{thm}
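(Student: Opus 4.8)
The plan is to establish finiteness through the dichotomy already visible from Theorem~\ref{GT1}, combined with a bound on Giroux torsion and a convex-decomposition argument. Concretely, I would reduce the problem to two independent assertions: (i) every tight contact structure on an atoroidal $M$ has \emph{bounded} Giroux torsion, with a bound depending only on $M$; and (ii) for each fixed bound $N$, the set of tight contact structures on $M$ with torsion at most $N$ is finite up to isotopy. Granting both, finiteness is immediate. Atoroidality enters only through (i), whereas (ii) is a statement that one expects to hold on any closed oriented $3$-manifold.

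For (i), the key point is that large Giroux torsion forces an essential torus. Suppose for contradiction that $M$ carries tight contact structures of arbitrarily large torsion. By the definition in Section~\ref{torsion}, $n$-torsion is supported along an embedded torus $T$ admitting a contact embedding of $(T^2\times I,\xi_n)$. I would argue that for large $n$ the core torus $T$ can be neither compressible nor boundary-parallel: were $T$ compressible, irreducibility of $M$ together with the explicit local model of $\xi_n$ would let one extract an overtwisted disk, contradicting tightness. Hence $T$ is incompressible, and since $M$ is closed and atoroidal no such torus exists. This contradiction yields a uniform bound $N=N(M)$ on the torsion.

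For (ii), I would use convex surface theory (the references \cite{H1,H2}) to cut $M$ along a finite family of convex surfaces into standard pieces --- balls and solid tori. After perturbing every cutting surface to be convex and putting all boundary tori into standard form, each ball carries a contact structure determined up to isotopy rel boundary by its characteristic foliation (Theorem~\ref{B3}), while each solid-torus piece is classified by Theorem~\ref{sd} in terms of finite boundary-slope data. The torsion bound $N$ is what controls the complexity of the decomposition: it caps the number of parallel convex tori one must insert in each $T^2\times I$ layer, so that only finitely many combinatorial types of decomposition occur across all tight contact structures at once. Once the slopes and torsion are bounded, the dividing-curve data on the finitely many gluing surfaces take values in a finite set, so only finitely many tight contact structures can arise; the remaining work is the routine bookkeeping of discarding overtwisted gluings and separating isotopic ones via the relative Euler class.

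The main obstacle is the uniformity in (ii): a priori each new tight contact structure might require a decomposition of strictly greater complexity, so that no single finite cut-system would detect them all. Overcoming this is precisely the technical heart of the Colin--Giroux--Honda argument and rests on a compactness statement for the space of tight contact structures together with the torsion bound from (i); that bound is exactly what converts the potentially unbounded twisting in each toric layer into finitely many normal forms, after which the gluing count closes the argument.
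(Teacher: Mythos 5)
First, note that the paper does not prove Theorem~\ref{GT2} at all: it is quoted as a black box from Colin--Giroux--Honda \cite{CGH}, so there is no internal proof to measure your argument against. Judged on its own terms, your proposal correctly identifies where atoroidality enters, and your step (i) can be made to work: torsion along a compressible torus in an irreducible manifold does violate tightness (such a torus bounds a solid torus or lies in a ball, and in either model a full $2\pi$ of twisting produces a meridional convex torus and hence an overtwisted disc). But this is a genuine theorem, essentially due to Colin \cite{Col}, not something that falls out of ``the explicit local model of $\xi_n$''; as written you are asserting it rather than proving it.

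The genuine gap is step (ii). As you state it --- finiteness, up to isotopy, of tight contact structures with torsion at most $N$ on an arbitrary closed oriented $3$-manifold --- it is not a reduction of Theorem~\ref{GT2} but is essentially the full coarse classification theorem of \cite{CGH}. The mechanism you propose for it does not close: a bound on Giroux torsion controls twisting only inside embedded $T^2\times I$ layers, and it does not bound the number of parallel convex surfaces one must insert, the number of dividing curves on higher-genus cutting surfaces, or the number of bypass attachments needed to normalize a given tight structure relative to a fixed cut system. Hence the claim that ``only finitely many combinatorial types of decomposition occur across all tight contact structures at once'' is precisely the statement to be proved; Theorems~\ref{B3} and~\ref{sd} only finish the count after that uniformity is in hand. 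Your final paragraph concedes this by invoking ``a compactness statement for the space of tight contact structures,'' which is the technical heart of the argument in \cite{CGH} (normal forms with respect to a fixed triangulation and branched-surface carrying arguments), not a consequence of the torsion bound. In short, the outline is a reasonable account of why the theorem should hold, but it assumes the hard part rather than proving it.
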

This tells us that the only source of $\mathbb{Z}$ many tight contact structures in a closed, connected, oriented 3-manifold is an incompressible torus. The proofs of Theorem \ref{GT1} and Theorem \ref{GT2} tell us that these $\mathbb{Z}$ many tight contact structures occur in a neighborhood, $T^2\times I$, of the incompressible torus. There are $\mathbb{Z}$ many non-isotopic tight contact structures on this $T^2\times I$ coming from Giroux $n$-torsion.\

\section{Tight structures on Seifert manifolds}\label{4}
In this Section, we will be giving the proof of Theorem \ref{general thm}, first in a special case and then in full generality. For details about the construction of Seifert fibered manifolds look at \cite{Hat}. We first look at a specific example of a Seifert fibered 3-manifold before proving it in full generality.

\subsection{Example case of Theorem \ref{general thm}}
\label{Example}
\begin{theorem*}[Special case of Theorem \ref{general thm}]    
There are three tight contact structures on $M(0,-4;1/2,1/2,1/2,\allowbreak 1/2)$  modulo Giroux torsion up to contact isotopy. All three of them are Stein fillable. For each $n\in\mathbb{Z}$ there exists at least one tight contact structure with n-Giroux torsion on $M$ up to contact isotopy. These tight contact structures are not weakly fillable.
\end{theorem*}

\begin{proof}
    
To construct this manifold we follow the method shown in \cite{Hat}. We start with the surface $S^2$. Let $X$ be $S^2$ with the interior of four disks removed and $M'$ be the circle bundle over $X$.

Identify each boundary component of $B'\times S^1$ with $\mathbb{R}^2/\mathbb{Z}^2$ by choosing $(1,0)^T$ to be the direction given by $-\partial(B'\times S^1)$ and $(0,1)^T$ to be the direction given by the $S^1-$fiber. Let $V_i's$ be the attaching solid tori. We identify
$\partial V_i$ with $\mathbb{R}^2/\mathbb{Z}^2$ by choosing $(1, 0)^T$ as the meridional direction and $(0, 1)^T$  as the longitudinal direction. The attaching maps $A_i:\partial V_i\rightarrow -\partial M'$ are given by $\left(\begin{array}{ll}
2 & -1\\
1 & \ \ 0
\end{array}\right)$.

The surgery representation of this manifold $M$ is shown on the left-hand side in Figure \ref{RT}. We perform four $(-1)$ Rolfsen twists to get the surgery diagram on the right in Figure \ref{RT}. The Legendrian representation of an unknot with surgery coefficient $-2$ has Thurston-Bennequin number $-1$ and hence the rotation number is $0$. This gives a unique Legendrian representation, for the four $-2$ framed unknots as shown in Figure \ref{R1}. For the unknot with surgery coefficient $-4$, the Legendrian realization has Thurston-Bennequin number $-3$, and hence the rotation number can be $-2,$ $0,$ or $2$. The corresponding Legendrian realizations are shown in Figure \ref{R2}. Since the rotation numbers of these three Legendrian realizations are different, the Chern numbers of the corresponding Stein structures are different \cite{LM}, hence, the three contact structures are non-isotopic. Since they are Stein fillable (using Lemma \ref{HF}) they have zero Giroux torsion \cite{Gay}. This tells us that there are at least three tight contact structures on $M$ with zero Giroux torsion up to contact isotopy. \ 
 
\begin{figure}
   \centering
    \includegraphics[width=2in]{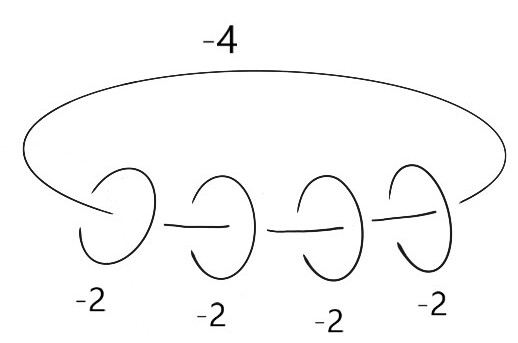}
     \caption{Surgery description of the Seifert manifold.}
    \label{RT}
\end{figure} 
 
\begin{figure}
   \centering
    \includegraphics[width=1in]{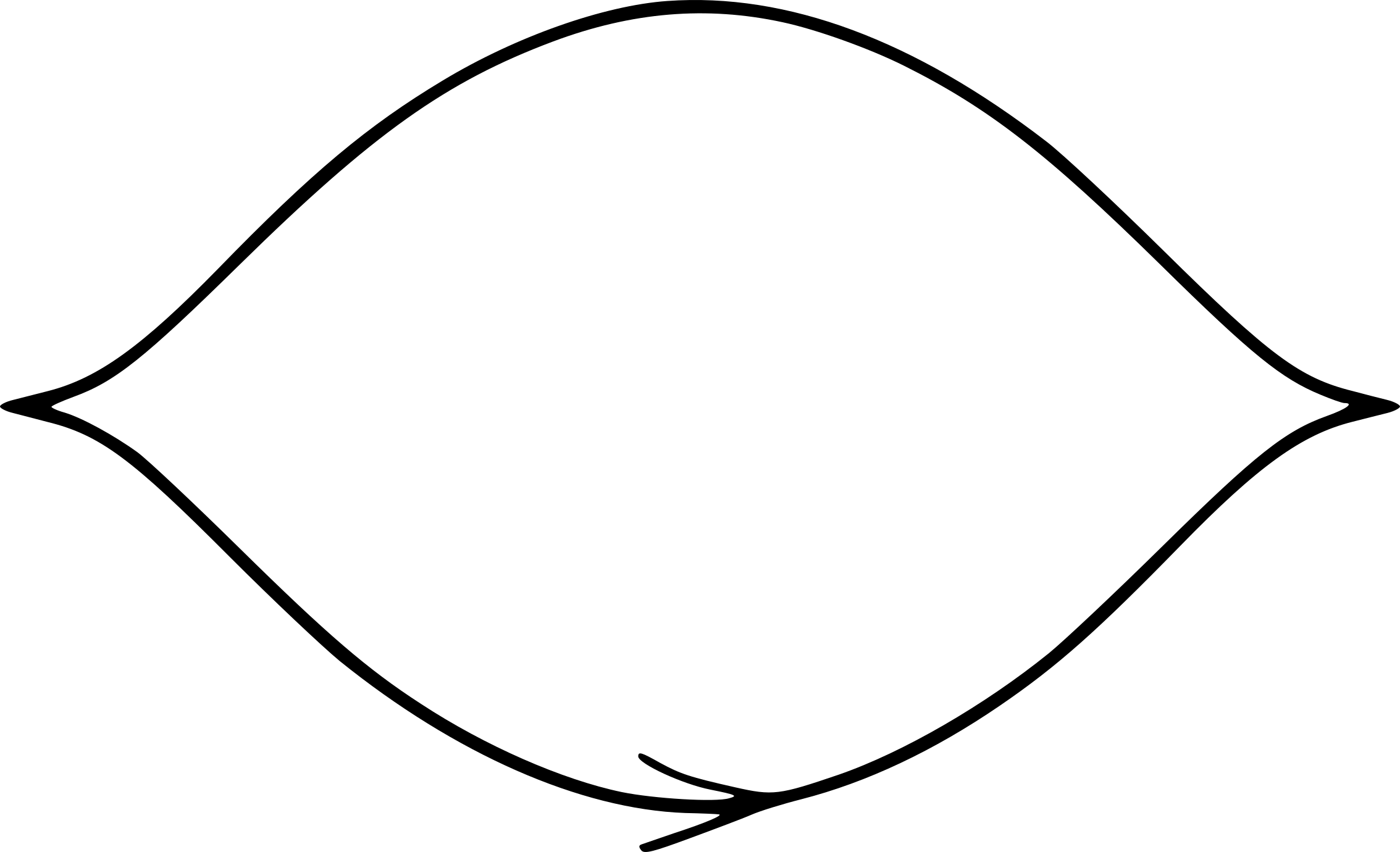}
     \caption{Legendrian realization of unknot with Thurston-Bennequin number -1.}
    \label{R1}
\end{figure} 

 \begin{figure}
   \centering
    \includegraphics[width=5in]{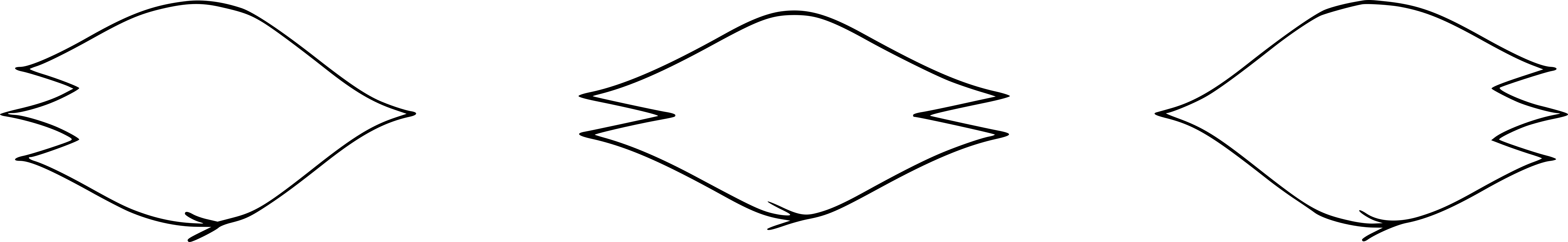}
     \caption{Legendrian realizations of the unknot with Thurston-Bennequin number -3.}
    \label{R2}
\end{figure}

\subsubsection{Upper bound}

\label{decompose}
The Seifert fibered 3-manifold with four exceptional fibers, $M(0,-4;1/2,1/2,1/2,\allowbreak1/2)$ has an incompressible torus. We would like to count the tight contact structures modulo this $\mathbb{Z}$ many tight contact structures. Consider a $T^2\times I$ neighborhood of one of the incompressible tori. We cut along $T_0$ and $T_1$ to decompose our manifold in three pieces, one of which is $T^2\times I$. The other two pieces are denoted by  $N_1$ and $N_2$. As shown in Figure \ref{U1} each $N_i$ can be decomposed in \{pair of pants\}$\times S^1$ and two solid tori.

 \begin{figure}
   \centering
    \includegraphics[width=3in]{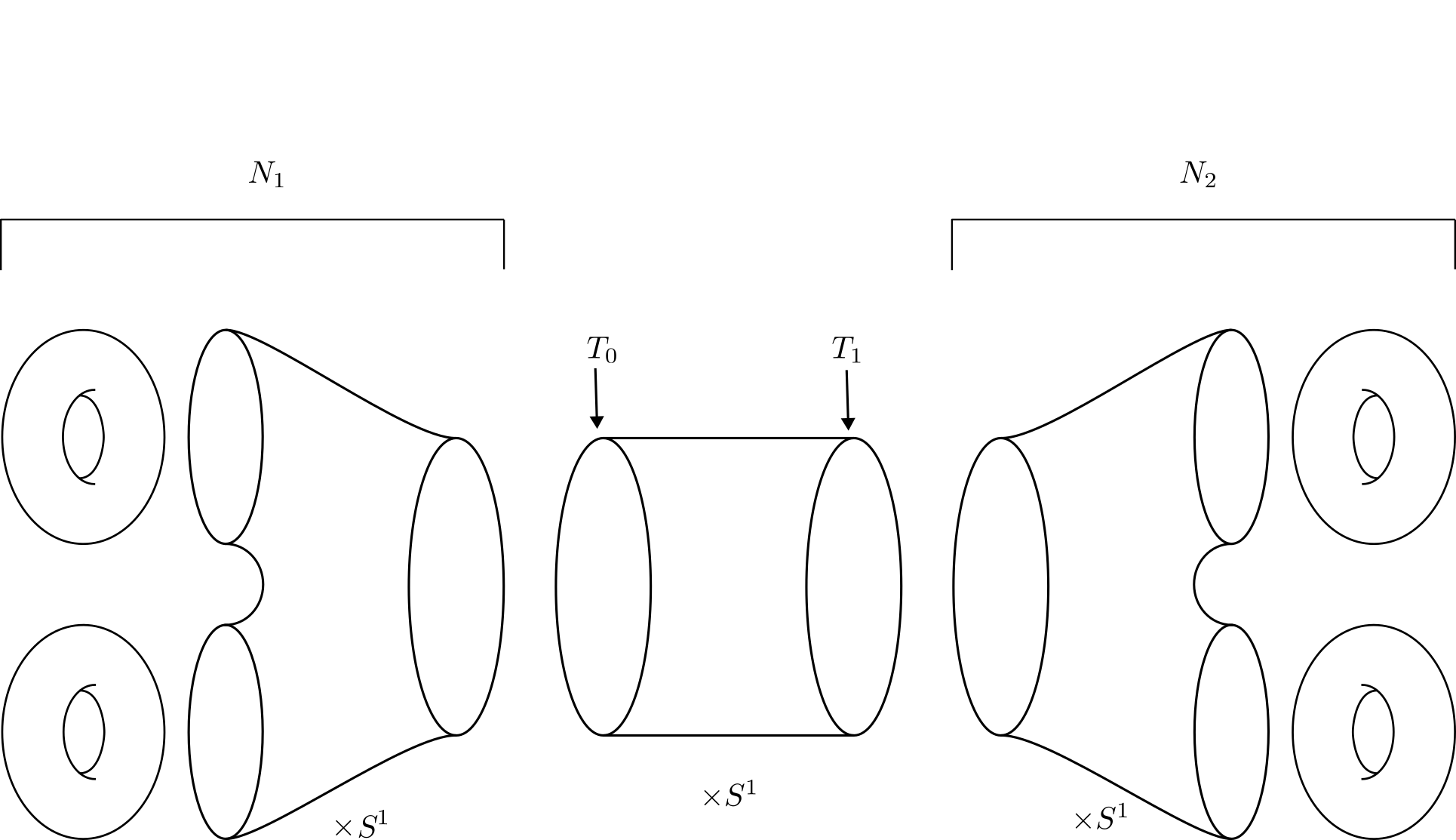}
     \caption{Decomposition of a Seifert fibered 3-manifold with four exceptional fibers.}
    \label{U1}
\end{figure}

\ 

Let us start by looking at one of these pieces, say $N_1$. We denote the \{pair of pants\}$\times S^1$ by $\Sigma\times S^1$ and the two solid tori by $V_1, V_2$. There are three boundary components of $\Sigma\times S^1$, which we denote by $\partial(\Sigma\times S^1)_i$, where $\partial(\Sigma\times S^1)_{i}$ for $i=1,2$ corresponds to the torus $\partial V_{i}$ for $i=1,2$ and $\partial(\Sigma\times S^1)_{3}$ is glued to the $T^2\times I$. Note that $\partial(\Sigma\times S^1)_{3}$ is the same as $\partial N_1$.  We denote the two singular fibers by $F_1$ and $F_2$ and their twisting numbers by $n_1$ and $n_2$. Since $V_1$ and $V_2$ are standard neighborhoods of $F_1$ and $F_2$. Assume that these are simultaneously isotoped to Legendrian curves and further isotoped so that their twisting number is negative. The slope of the dividing curves (Example 1.4.12 in \cite{Wan}) on $\partial V_i$ is $\displaystyle\frac{1}{n_i}$. Using the construction from \cite{Hat}, the two attaching maps $A_i:\partial V_i\rightarrow -\partial (\Sigma\times S^1)_i$ are given by $\left(\begin{array}{ll}
2 & -1\\
1 & \ \ 0
\end{array}\right)$ for $i=1,2$. 
Note that the slope of dividing curves on $\partial(\Sigma\times S^1)$ is not $\infty$. Using the flexibility of Legendrian ruling (Corollary 3.6 in \cite{Hon01}) we may assume that the Legendrian ruling slope of $\partial(\Sigma\times S^1)_{i}$ for $i=1,2,3$ is infinite. 
\ 
\subsubsection{Maximising twisting numbers}
We use the same methods to maximize the twisting number as illustrated in \cite{GS}. For $i=1,2$ we have $A_i.(n_i,1)^T=(2n_i-1,n_i)$. We denote the slope of the dividing curve on $-\partial(\Sigma\times S^1)_i$ by $s(\partial(\Sigma\times S^1)_i)=\displaystyle\frac{n_i}{2n_i-1}$. \ 

\ 

\begin{lem}\label{mtn}
We can increase the twisting numbers $n_1$ and $n_2$ up to $0$.
\end{lem}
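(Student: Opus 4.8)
The plan is to follow the bypass-attachment scheme of \cite{GS}: I locate bypasses along each torus $\partial V_i$ by means of a convex vertical annulus, attach them to thicken the solid torus $V_i$, and show that each attachment raises the twisting number by one step until it reaches $0$. Throughout I work in the complement $\Sigma\times S^1$, whose boundary slopes and infinite Legendrian ruling slope have already been fixed above, and I treat the two singular fibers in parallel.

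First I would fix $i\in\{1,2\}$ and build a convex vertical annulus $A_i\subset\Sigma\times S^1$ whose two Legendrian boundary curves are vertical ($\infty$-sloped) ruling curves, one lying on $\partial(\Sigma\times S^1)_{i}=\partial V_i$ and the other on $\partial(\Sigma\times S^1)_{3}=\partial N_1$. Such an $A_i$ exists because the ruling slopes on all three boundary components have been arranged to be $\infty$, and after a $C^\infty$-small perturbation (Legendrian realisation) it is convex. I would then compare the number of intersection points of $\Gamma_{A_i}$ with the two ends: on the $\partial V_i$ side the dividing slope is $\frac{n_i}{2n_i-1}$, so one vertical ruling curve meets $\Gamma_{\partial V_i}$ in $2|2n_i-1|$ points, which is large when $n_i\ll 0$, whereas on the $\partial N_1$ side the count can be arranged to be minimal, namely $2$. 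The resulting imbalance forces, by the Imbalance Principle \cite{H1}, a $\partial$-parallel dividing arc on $A_i$ cutting off a half-disk against $\partial V_i$, that is, a bypass for $\partial V_i$ attached from the $\Sigma\times S^1$ side.

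Next I would attach this bypass to $\partial V_i$. By Honda's bypass-attachment (thickening) lemma \cite{H1}, attaching a bypass along a vertical ruling curve moves the dividing slope of $\partial V_i$ one step toward the fiber slope on the Farey graph, which in terms of the twisting number is precisely $n_i\mapsto n_i+1$; equivalently, $V_i$ is enlarged to a standard neighborhood of a Legendrian fiber of twisting number $n_i+1$. I would then iterate. As long as $n_i<0$ the count $2|2n_i-1|$ exceeds $2$, so the imbalance persists and a new bypass is always available; when $n_i=0$ the two ends of $A_i$ carry the same number of intersection points (both equal to $2$), the imbalance disappears, and no further bypass is forced, so the process terminates exactly at $0$. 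Because $A_1$ and $A_2$ can be taken to lie over disjoint properly embedded arcs in the pair of pants $\Sigma$, the two annuli are disjoint and the two thickenings may be carried out simultaneously without interference.

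The main obstacle is to guarantee that every bypass supplied by the imbalance is genuinely attachable and nontrivial, so that its attachment strictly increases $n_i$ and keeps us within the tight category rather than creating an overtwisted disk, and, above all, that the procedure is not obstructed before $n_i$ reaches $0$. This is exactly where the geometry of this particular manifold enters: all the forced bypasses push the dividing slope monotonically in one direction toward the fiber slope, and the continued-fraction bookkeeping attached to the gluing matrices fixed above shows that each intermediate slope is admissible, so no overtwisted disk can appear before the fiber slope is attained. Verifying this admissibility at every intermediate step, together with the disjointness needed to run the two maximisations simultaneously, is the technical heart of the argument.
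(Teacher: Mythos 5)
Your overall strategy (vertical convex annuli, the Imbalance Principle, bypass attachment via the Twist Number Lemma) is the same as the paper's, but there is a genuine gap at the step where you invoke the imbalance: you assert that on the $\partial N_1$ side of your annulus ``the count can be arranged to be minimal, namely $2$.'' This is not something you can arrange. The torus $\partial N_1$ is a convex torus in the ambient tight structure; while you may normalize its characteristic foliation and reduce $\#\Gamma$ to $2$, the \emph{slope} of its dividing curves is determined by the contact structure, and a vertical Legendrian ruling curve meets a dividing set of slope $p/q$ in $2|q|$ points. If that slope is not an integer, the intersection count on the $\partial N_1$ end could be large, the imbalance could point the wrong way, and your iteration would stall (or produce bypasses on the wrong torus) before $n_i$ reaches $0$. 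Everything downstream --- the claim that a bypass for $\partial V_i$ is always available while $n_i<0$, and that the process terminates exactly at $n_i=0$ --- depends on this unjustified count.

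The paper supplies exactly the missing ingredient by a two-stage argument. First it takes an annulus between $\partial(\Sigma\times S^1)_1$ and $\partial(\Sigma\times S^1)_2$: if $n_1\neq n_2$ the Imbalance Principle already yields a bypass and the Twist Number Lemma (using that the ruling slope on $\partial V_i$ is $2$, computed from $A_i^{-1}$) increases the smaller twisting number. In the remaining case $n_1=n_2$ with no bypass on that annulus, it cuts $N_1\setminus(V_1'\cup V_2')$ along the annulus and applies the Edge Rounding Lemma to \emph{compute} that the dividing slope on $-\partial N_1$ is $-1$; only then does the annulus from $\partial(\Sigma\times S^1)_1$ to $\partial N_1$ give intersection counts $2|2n_1-1|$ versus $2$, forcing the bypasses you want for all $n_1<0$. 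You need to either reproduce this computation of $s(\partial N_1)$ or replace it with some other argument controlling the dividing set of $\partial N_1$; as written, your proof assumes the conclusion of that computation. (Your closing appeal to ``continued-fraction bookkeeping'' showing no overtwisted disk appears is also vaguer than what is needed; the paper's use of the Twist Number Lemma with ruling slope $2$ on $\partial V_i$ is the precise statement that each bypass attachment realizes $n_i\mapsto n_i+1$ within the tight category.)
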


\begin{proof}
Consider an annulus $I\times S^1$ in $M$ from $\partial(\Sigma\times S^1)_1$ to $\partial(\Sigma\times S^1)_2$ such that its boundary consists of Legendrian ruling curves on the tori. The boundary of this annulus intersects the dividing curves in $2(2n_i-1)$ points respectively. \ 

\ 

If $n_1\neq n_2$ then, due to the imbalance principle (\cite{Hon}) there exists a bypass along a Legendrian ruling curve on either of the boundaries. Note that $A_i^{-1}=\left(\begin{array}{ll}
0 & 1\\
-1 & 2
\end{array}\right)$ and hence the Legendrian ruling has slope $2$ on $\partial V_i.$ Using the twist number Lemma we can attach a bypass and thicken $V_i$ to $V_i'$ increasing the twisting number as long as $n_i< 0.$ We can apply the imbalance principle till $n_1= n_2$. Since $n_1=n_2$ there are an equal number of bypasses on both ends. We attach any remaining bypasses, if any, on either of the boundaries. Once you attach all the bypasses, there will be no boundary parallel dividing curves and, hence all the dividing curves will run from one boundary component to the other.  \ 
 
\ 

Now, we can assume that $n_1=n_2$ and there is no bypass on the annulus $A=I\times S^1$ in $M$ from $\partial(\Sigma\times S^1)_1$ to $\partial(\Sigma\times S^1)_2$. Then we cut $N_1\setminus(V_1'\cup V_2')$ open along $A$. Note that a neighborhood of $A\cup V_1'\cup V_2'$ is a piecewise smooth solid torus with four edges. We use the edge rounding Lemma (Lemma 3.11, \cite{Hon01}) to smoothen these four edges. Since each rounding changes the slope by an amount of $-\frac{1}{4}\frac{1}{2n_1-1}$, the slope of the diving curves on the boundary torus is
$$
\label{cnr}
s(\partial(\Sigma\times S^1)_1) +s(\partial(\Sigma\times S^1)_2)-4(\frac{1}{4}\frac{1}{2n_1-1})
$$
$$
\frac{n_1}{2n_1-1}+\frac{n_1}{2n_1-1}-\frac{1}{2n_1-1}=1.
$$
This boundary torus is isotopic to $\partial N_1$ and identified with $\mathbb{R}^2/\mathbb{Z}^2$ in the same way as $\partial N_1$. Hence the slope of the dividing curves on boundary torus $-\partial N_1$ is $-1.$\ 

\ 

Now take an annulus $I\times S^1$ from $\partial(\Sigma\times S^1)_1$ to $\partial N_1$. For $n_1<0$ we have $2n_1-1<-1$. Hence there exists a bypass by the imbalance principle on $V_1$ until we increase $n_1$ up to $0$. One can do a similar calculation for $n_2.$ Hence we can increase $n_1$ and $n_2$ until $n_1=n_2=0.$\ 

\end{proof}
\ 

\subsubsection{Combining tight contact structures on basic blocks}
We have $n_1=n_2=0$. The slope of the dividing curves on $-\partial N_1$ is $-1$, on $\partial(\Sigma\times S^1)_1
$ is $0$ and on $\partial(\Sigma\times S^1)_2$ is $0.$ Now we count the number of tight contact structures on $\Sigma \times S^1$ when the slope of the dividing curves on the three boundary tori is $0,0,-1$. Again consider an annulus $I\times S^1$ from $\partial(\Sigma\times S^1)_1$ to $\partial(\Sigma\times S^1)_2$. Either there exists a bypass on both boundary components or not. \ 

\ 

Case 1: If no bypass exists then we have the following conditions: 
According to the classification Lemma 5.1 of Honda \cite{Hon02}, $\Sigma\times S^1$ with such boundary slopes has a unique tight contact structure as shown in Figure \ref{A} (Note that we are using the opposite sign convention to Honda's). Call it $\xi_A$. Using the $A_i^{-1}$ the slope of the dividing curve on the boundary of $V_{i}$ for $i=1,2$ is $\infty.$ By Proposition \ref{sd}, there is exactly one tight contact structure on $V_{i}$ for $i=1,2$. This gives a unique tight contact structure on $N_1.$\ 

\begin{figure}
   \centering
    \includegraphics[width=1.5in]{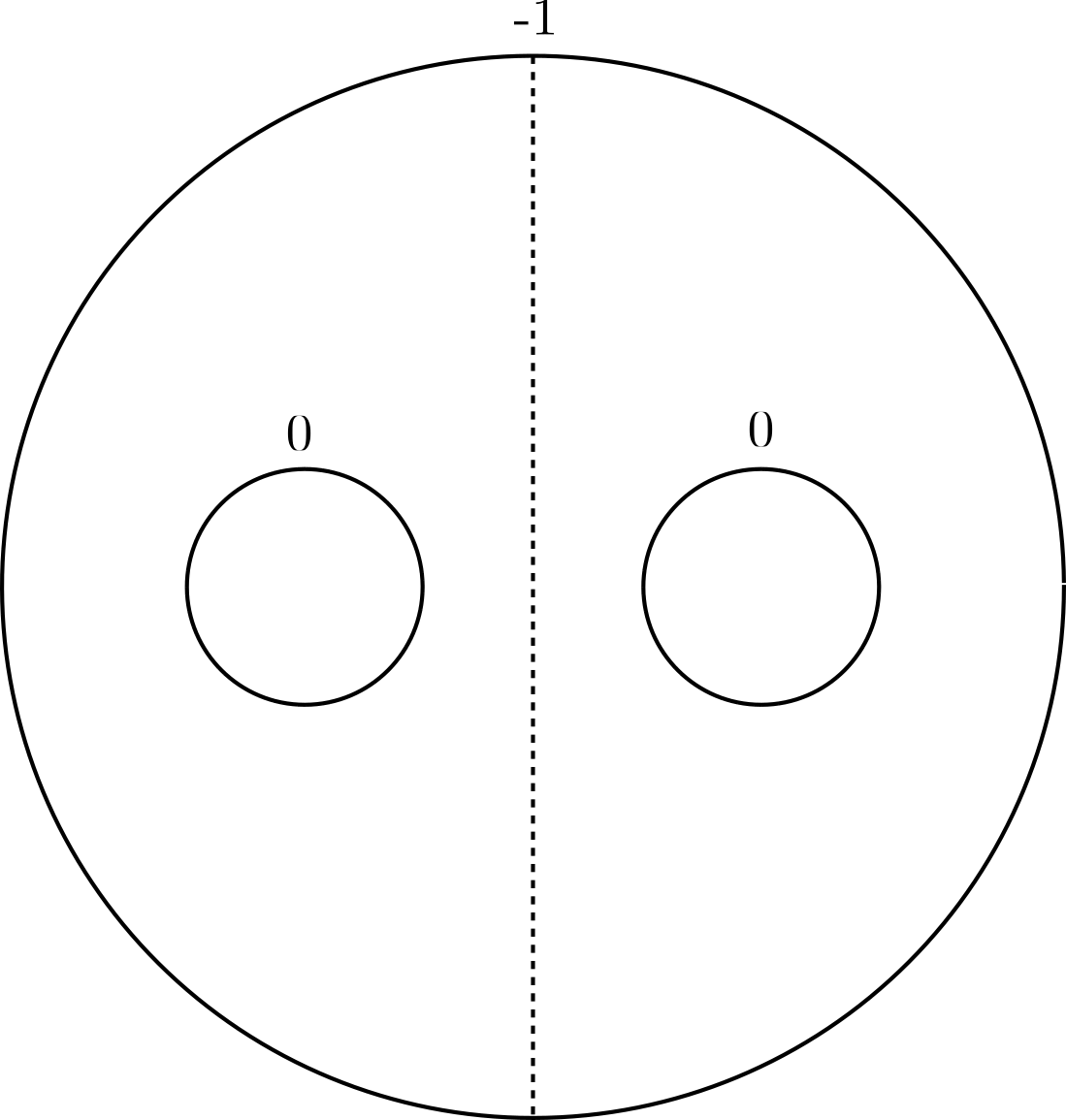}
     \caption{Dividing curve on $\Sigma$ with contact structure $\xi_A$ on $\Sigma\times S^1$.}
    \label{A}
\end{figure}\

\ 

Case 2: If there is a bypass then the cutting and rounding construction (see edge rounding in \cite{Hon}) gives a torus of infinite slope after a bypass attachment. Then Honda's classification of tight contact structures on $\Sigma \times S^1$ (Lemma 5.1, \cite{Hon02})  asserts that there exists a unique factorisation $\Sigma\times S^1= \Sigma'\times S^1 \cup L_1\cup L_2\cup L_3$, where the $L_i$ are $T^2\times I$ with minimal twisting and all components of the boundary of $\Sigma'\times S^1$, denoted by $\partial(\Sigma'\times S^1)_{i}$, have dividing curves of $\infty$ slope. Figure \ref{W1} shows $\Sigma$ and $\Sigma'$ with their boundary slopes. Here we fix $\Sigma$ to be $\Sigma \times \{0\}$ and $\Sigma'$ to be $\Sigma'\times \{0\}.$

\begin{figure}
   \centering
     \includegraphics[width=2in]{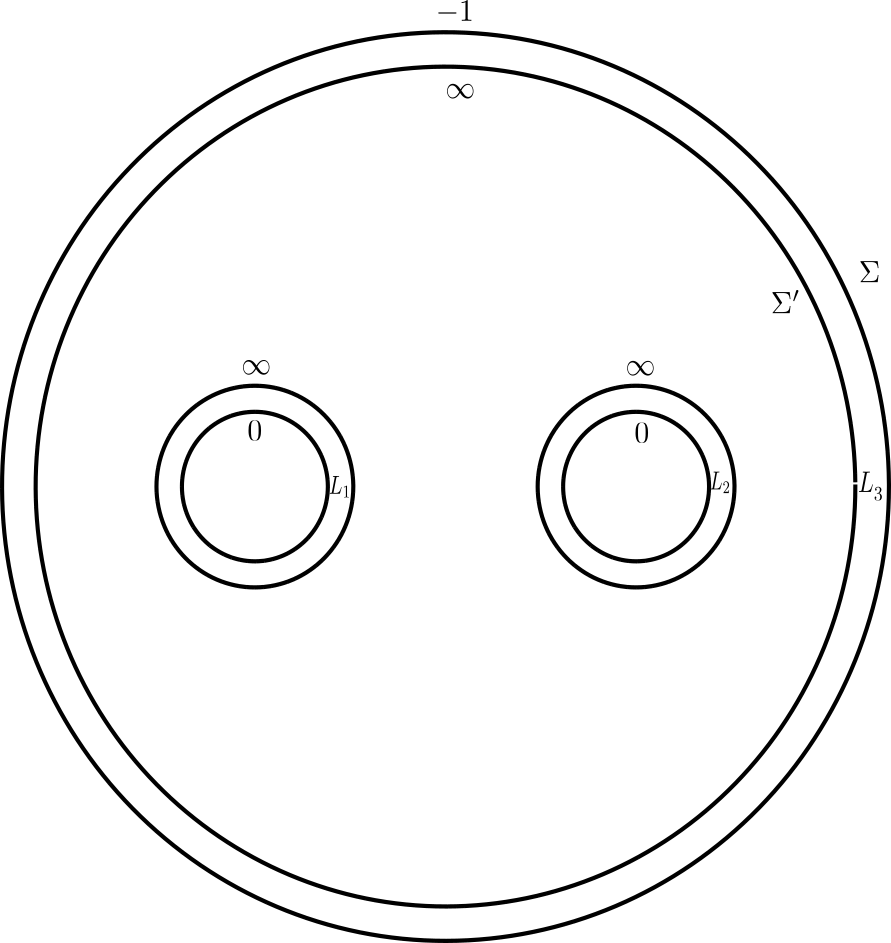}
     \caption{$\Sigma$ and $\Sigma'$ with their boundary slopes.}
    \label{W1}
\end{figure}\

Let us start by looking at the tight contact structures on $\Sigma'\times S^1$ and then add in the $L_i$ to get the tight contact structures on $\Sigma\times S^1$. Each boundary component of $\Sigma'$ intersects the dividing set of the corresponding torus twice.
\begin{figure}
   \centering
     \includegraphics[width=3in]{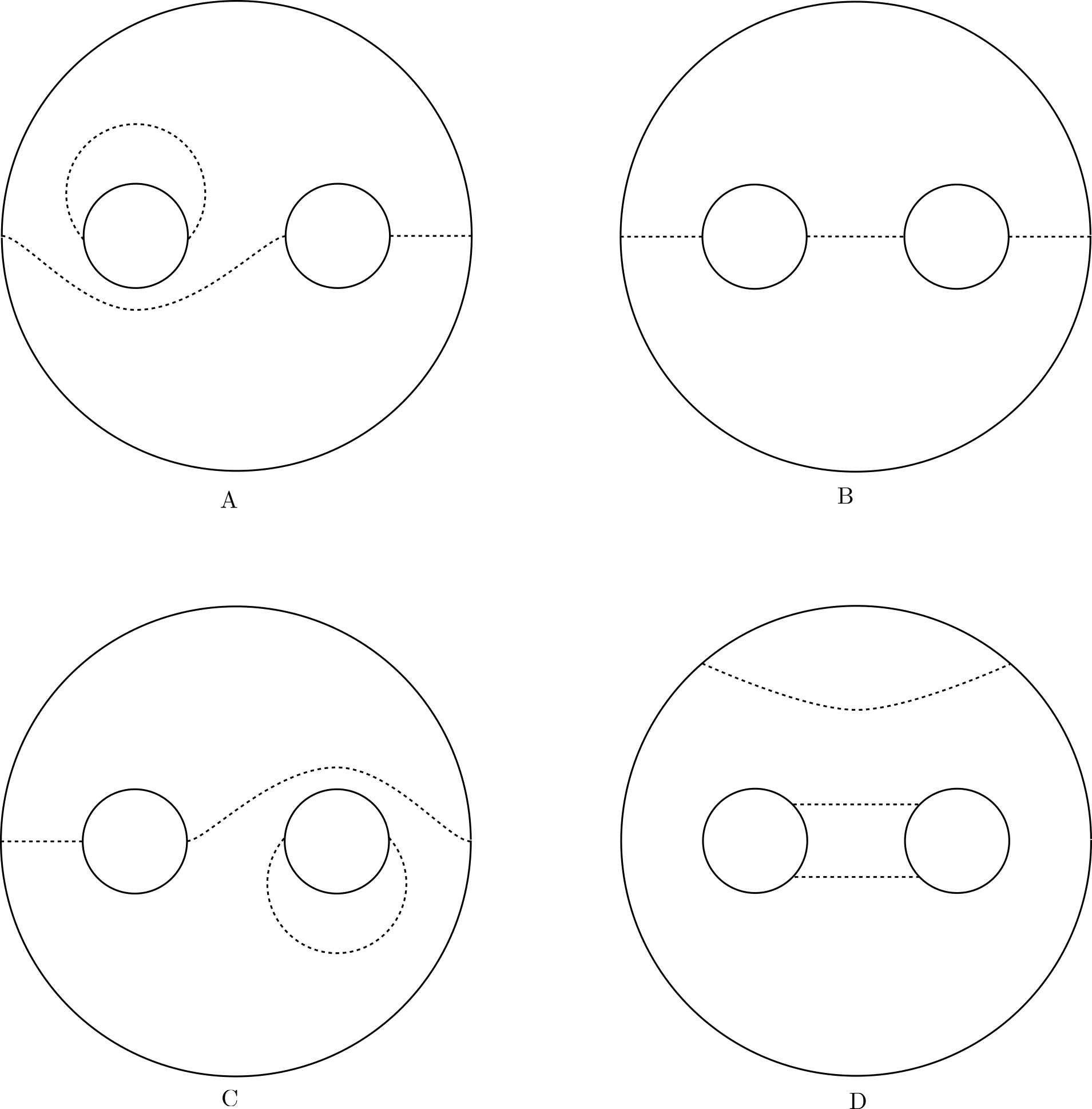}
     \caption{Possible dividing curves on pair of pants.}
    \label{pop}
\end{figure}\ 
\begin{lem}
\label{dvc}
Dividing curves on $\Sigma'$ either
connect boundary components in pairs as in Figure \ref{pop} B, or we have one boundary parallel dividing curve on $\partial(\Sigma\times S^1)_{3}$ and two dividing curves connecting $\partial(\Sigma\times S^1)_{1}$ to $\partial(\Sigma\times S^1)_{2}$ as shown in Figure \ref{pop} D. 
\end{lem}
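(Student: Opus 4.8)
The plan is to study the dividing set $\Gamma_{\Sigma'}$ on the convex pair of pants $\Sigma'=\Sigma'\times\{0\}$ directly, as a properly embedded $1$-manifold, and to enumerate its isotopy types. First I would reduce to the case where $\Gamma_{\Sigma'}$ consists only of arcs. By Giroux's criterion, tightness on $\Sigma'\times S^1$ forbids any homotopically trivial closed component of $\Gamma_{\Sigma'}$; since $\Sigma'$ is planar, the only remaining closed curves are boundary-parallel, and these can be excluded because $\Sigma'\times S^1$ is the minimal piece of the factorization (all three boundary slopes equal to $\infty$, with the minimally twisting $L_i$ already split off), so none of the $\partial(\Sigma\times S^1)_i$ can absorb a further $T^2\times I$ layer. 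Hence $\Gamma_{\Sigma'}$ is a disjoint union of properly embedded arcs. The hypothesis that each boundary circle meets the dividing set in exactly two points then gives six endpoints, two on each component $c_i:=\partial(\Sigma\times S^1)_i$, and therefore exactly three arcs.

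Next I would carry out the enumeration. Writing $n_{ij}$ for the number of arcs joining $c_i$ to $c_j$, and $n_{ii}$ for the number of boundary-parallel self-arcs on $c_i$, the endpoint count gives the three equations $n_{12}+n_{13}+2n_{11}=2$ together with its two cyclic permutations. Solving over the nonnegative integers, the only solutions are: (C) $n_{12}=n_{13}=n_{23}=1$, one arc between each pair of boundary components; (D) a single self-arc on one circle together with two arcs between the other two, e.g.\ $n_{33}=1$ and $n_{12}=2$; and the spurious type in which every circle carries a self-arc, $n_{11}=n_{22}=n_{33}=1$ (an essential self-arc separating two of the holes forces exactly this combinatorics). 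One checks there is no solution with precisely two self-arcs, so the list is complete, and since the partition of endpoints by boundary component is an isotopy invariant, these types are genuinely distinct.

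The crux is to discard the \emph{three self-arc} type and retain only C and D. Here I would use the solid-tori gluing rather than Giroux's criterion alone, since as an $S^1$-invariant surface the three-self-arc configuration has no contractible closed dividing curve and hence looks admissible in isolation. The decisive point is that $c_1$ and $c_2$ bound the standard neighborhoods $V_1,V_2$ of the singular fibers, on which (with boundary slope $\infty$ in the $\Sigma'$-coordinates) Proposition \ref{sd} pins down a \emph{unique} tight filling; a boundary-parallel dividing arc on $c_1$ or $c_2$ is incompatible with this filling, as rounding the corners after gluing $V_i$ produces a homotopically trivial dividing curve, i.e.\ an overtwisted disk. Thus self-arcs may occur only on $c_3=\partial N_1$, the component abutting the $T^2\times I$, which rules out the three-self-arc type and simultaneously shows that any self-arc must lie on $c_3$. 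This leaves precisely the configurations C and D of Figure \ref{pop}. I expect this last exclusion to be the main obstacle, since it is the only step that genuinely uses the contact geometry---the uniqueness in Proposition \ref{sd} and an overtwisted-disk argument---rather than the planar combinatorics of the preceding steps.
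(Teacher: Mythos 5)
Your proposal is correct and follows essentially the same route as the paper: the decisive step in both is that a boundary-parallel dividing arc on $\partial(\Sigma\times S^1)_{1}$ or $\partial(\Sigma\times S^1)_{2}$ yields a bypass whose attachment forces excess radial twisting in the thickened solid torus $V_i'$, hence an overtwisted disk, leaving only the configurations C and D. Your explicit endpoint-counting enumeration (including the three-self-arc case and the exclusion of closed components of the dividing set) merely fills in combinatorics that the paper's proof leaves implicit.
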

\begin{proof}
We call $\partial(\Sigma\times S^1)_{1}$ as boundary component one. Similarly for boundary component two. Assume there is a boundary-parallel dividing arc on boundary component one or on boundary component two as shown in Figure \ref{pop} A, C. Say the boundary parallel dividing arc is across  $\partial(\Sigma\times S^1)_{1}$. This means there is a bypass along $\partial(\Sigma\times S^1)_{1}$. After attaching this bypass we can thicken $V_1\cup L_1$ to get $V_1'$. The slope on the boundary of $V_1'$ is $0$ in the basis of $\Sigma'\times S^1$. Hence we have a toric annulus $L_1$ with boundary slopes $0$ and $\infty$ and an extension of this toric annulus is another toric annulus with boundary slopes $\infty$ and $0$. The slope goes from $0$ to $\infty$ to $0$ on $V_1'$ and hence the contact structures on $V_1'$ are overtwisted. Similarly, we get an overtwisted contact structure if we have a boundary-parallel dividing arc on boundary component two. The possible dividing curve configurations without boundary-parallel dividing arcs on boundary component one or on boundary component two are of the form described in this Lemma.


\end{proof}\ 
From the result in Lemma \ref{dvc} we can divide our analysis into two cases, corresponding to Figure \ref{pop} B or D. These are two different dividing sets corresponding to different contact structures on $\Sigma\times S^1$. We will first count all the tight contact structures we get corresponding to Figure \ref{pop} B. We refer to this analysis as Case 2A. Then we do the same for Figure \ref{pop} D and refer to it by Case 2B.
 
\ 

Case 2A: Consider the set of dividing curves where each curve on $\Sigma'$ connects one boundary component to the other as shown in Figure \ref{pop} B. The following Lemma is proved by Honda and Etnyre \cite{EH}. I restate it for clarity. 
\begin{lem}
\label{unique}
The two dividing sets given by two different sign configurations in Figure \ref{pop} B give a unique contact structure on $\Sigma'\times S^1.$ 
\end{lem}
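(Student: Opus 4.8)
The plan is to show that the two sign configurations on the dividing set of Figure~\ref{pop}~C yield contact structures on $\Sigma'\times S^1$ that are contact isotopic, so that as an \emph{unsigned} dividing set there is a single tight contact structure. The key structural fact is that $\Sigma'\times S^1$ with all three boundary slopes equal to $\infty$ and with every dividing arc of $\Sigma'$ running between distinct boundary components is precisely the situation governed by Honda's classification of tight contact structures on $\{\text{pair of pants}\}\times S^1$ (Lemma 5.1 of \cite{H2}, already invoked above). First I would invoke that classification to establish that a tight contact structure on $\Sigma'\times S^1$ with these boundary data is determined by its dividing set on a \emph{single} horizontal convex pair of pants $\Sigma'=\Sigma'\times\{0\}$, together with the sign configuration, and that the configuration $C$ (three arcs, each connecting two different boundary circles) is realizable and tight.

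Next I would show that the two sign configurations are interchanged by an ambient contactomorphism. The natural candidate is the symmetry of $\Sigma'\times S^1$ coming from the $S^1$ action, specifically the vertical translation that shifts the $S^1$ factor by a half-period, combined with the involution of the pair of pants that preserves the arc pattern but swaps the two complementary regions. Under such a symmetry the dividing set of type $C$ is carried to itself, while the induced signs on the two complementary regions are exchanged. Since this map is a diffeomorphism pulling back one contact structure to the other and fixing the (unsigned) dividing set, the two signed structures are contactomorphic; to upgrade this to contact isotopy I would use that the map is isotopic to the identity through the $S^1$-translation, and apply the flexibility/Giroux realization on the boundary to make the isotopy rel the prescribed boundary characteristic foliation. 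Equivalently, one can argue directly that the two sign choices give the same relative Euler class (the relevant invariant by \cite{H1}): reversing all signs on a connected dividing complement does not change the homology class of the dividing curves, and for the arc pattern $C$ the Poincar\'e dual computation of $e(\xi)$ returns the same value for both sign assignments, so by Honda's classification the two structures coincide.

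The step I expect to be the main obstacle is verifying carefully that the candidate symmetry genuinely interchanges the two sign configurations \emph{and} is compatible with the fixed boundary conditions. The boundary slopes are all $\infty$ and the Legendrian rulings have been normalized, so one must check that the half-period $S^1$-shift does not disturb the boundary dividing data beyond what the flexibility Lemma can correct; this requires tracking how the signs on the two regions of $\Sigma'\setminus\Gamma_{\Sigma'}$ transform and confirming that no orientation subtlety obstructs the identification. Once that bookkeeping is done, the conclusion that both sign configurations give a single tight contact structure on $\Sigma'\times S^1$ follows from the uniqueness half of Honda's Lemma~5.1 in \cite{H2}.
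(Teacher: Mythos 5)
Your overall strategy --- exhibit a symmetry exchanging the two sign configurations, or show they share all invariants and appeal to a classification --- is not the paper's argument, and as written it has a genuine gap at exactly the step you flag as the main obstacle. The candidate symmetry (half-period shift in the $S^1$ direction composed with an involution of the pair of pants swapping the two complementary regions) is a contactomorphism only if the contact structure is invariant under the $S^1$-action; an arbitrary tight contact structure on $\Sigma'\times S^1$ with these boundary data is not known to be $S^1$-invariant at this stage, and proving that it is isotopic to an invariant model is essentially equivalent to the uniqueness you are trying to establish. The fallback via the relative Euler class has the same circularity: even granting that $e(\xi)$ evaluates identically for both sign assignments on $[\Sigma']$ (and you would also need to check the vertical annuli, which you do not mention), concluding isotopy from equality of Euler classes requires knowing that the relative Euler class is a complete invariant for tight structures on this piece, which is not available independently of the classification being proved. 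So neither branch of the proposal closes.

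The paper's proof is a convex decomposition argument that avoids both issues: cut $\Sigma'\times S^1$ along the convex surface $\Sigma'$ and round edges to obtain a genus-two handlebody whose boundary dividing set is determined by $\Gamma_{\Sigma'}$; choose two meridional disks each meeting the dividing set in exactly two points, so each carries a unique dividing arc; cutting along them yields a tight $B^3$, where Eliashberg's theorem (Theorem \ref{B3}) gives uniqueness rel boundary. Since the dividing sets on every cutting surface are forced by the initial configuration, there is at most one tight contact structure inducing the configuration of Figure \ref{pop} C, independently of the sign configuration. If you want to salvage your approach, the honest route is to first prove uniqueness by such a cutting argument (or quote it from \cite{EH}), after which your symmetry statement becomes a corollary rather than the engine of the proof.
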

\begin{proof}
We start by cutting $\Sigma'\times S^1$ along $\Sigma'$ and then round the edges (see edge rounding in \cite{Hon}). We get a solid genus-two handlebody. We can arrange the dividing curves on the boundary so that two meridional disks intersect the dividing set exactly twice. (This is shown in Figure \ref{q1}.) Hence there is a unique dividing curve, separating the two intersection points, on these two disks. We cut along these two disks to get a 3-ball. There is a unique contact structure on this 3-ball with the given restriction to the boundary surface (Theorem \ref{B3}). Since the dividing curves on the surface, we cut along are determined by our initial configuration of dividing curves, we get a unique contact structure on $\Sigma'\times S^1$.
\begin{figure}
   \centering
    \centering
    \includegraphics[width=2in]{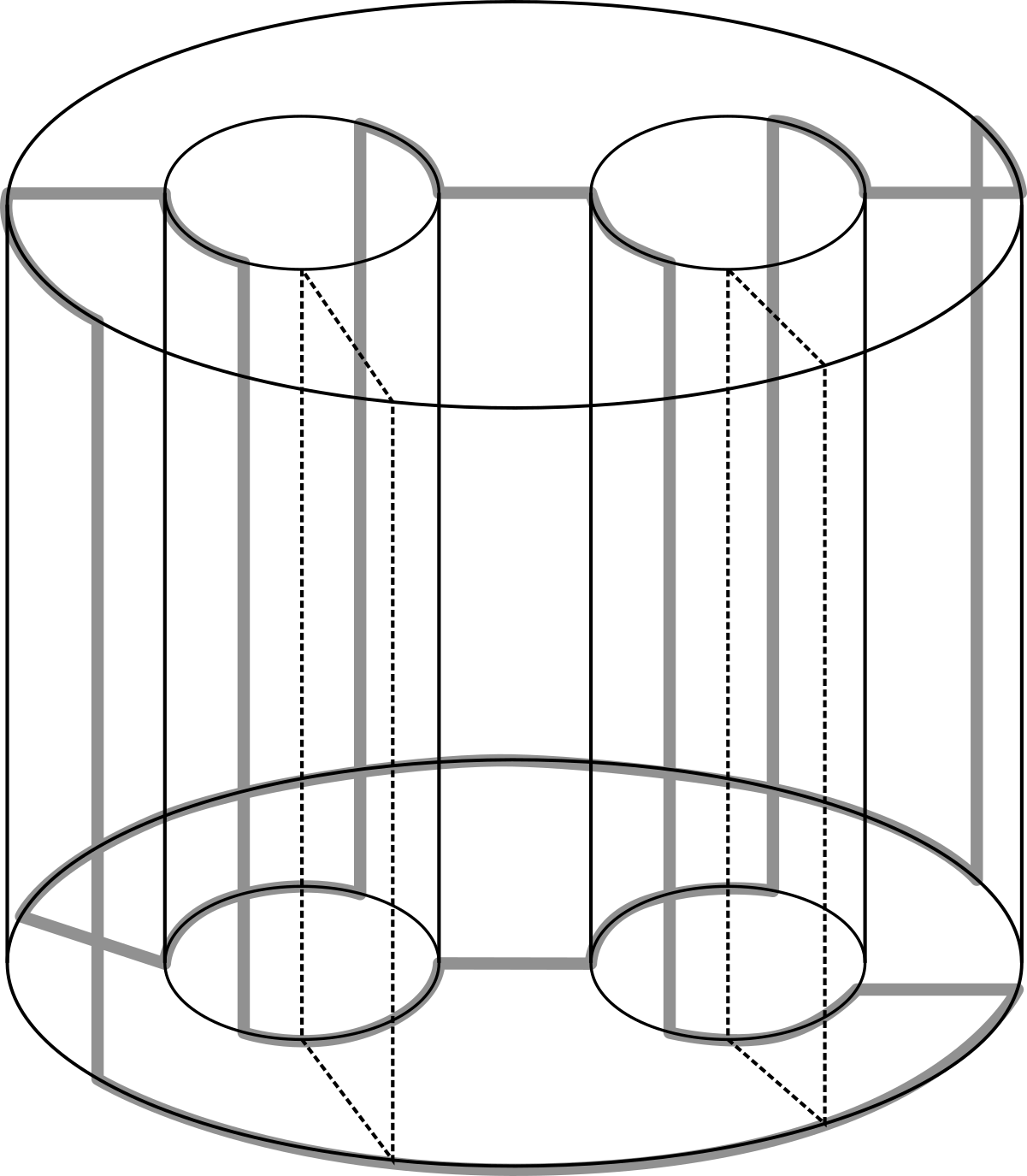}
    
     \caption{Dividing curves on boundary of $\Sigma'\times I$.}
    \label{q1}
\end{figure} 
\end{proof}\ 

\ 

\begin{figure}
   \centering
    \includegraphics[width=2.5in]{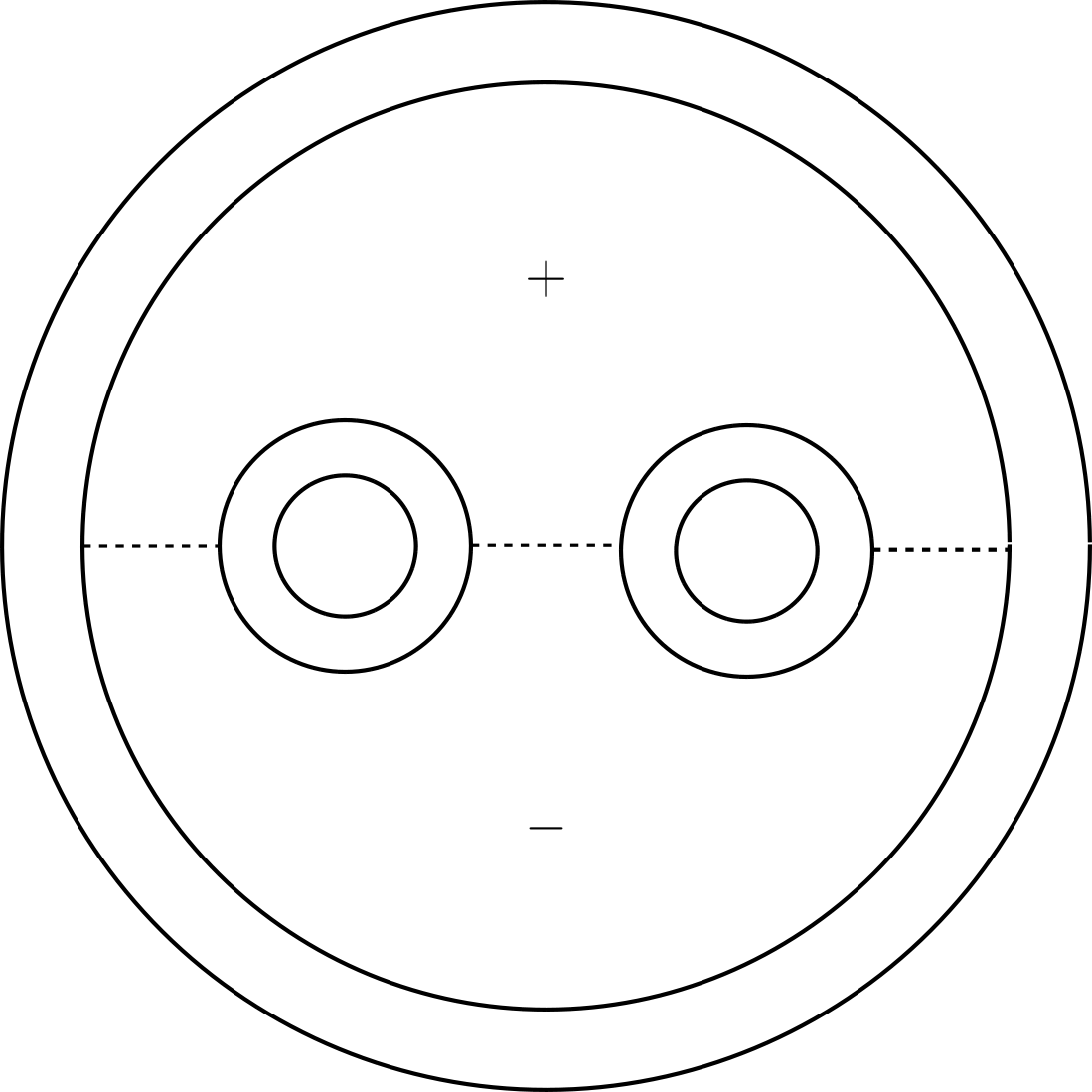}
     \caption{Possible dividing set on $\Sigma$.}
    \label{Z1}
\end{figure}\

Case 2B: Let us look at the case when one dividing curve goes from $\partial(\Sigma\times S^1)_{3}$ to itself and two dividing curves go from $\partial(\Sigma\times S^1)_{1}$ to $\partial(\Sigma\times S^1)_{2}$ as shown in Figure \ref{pop} D. 
This gives us at most two tight contact structures on $\Sigma \times S^1$ for Case 2B, one for each sign configuration.

\begin{figure}
   \centering
    \includegraphics[width=3in]{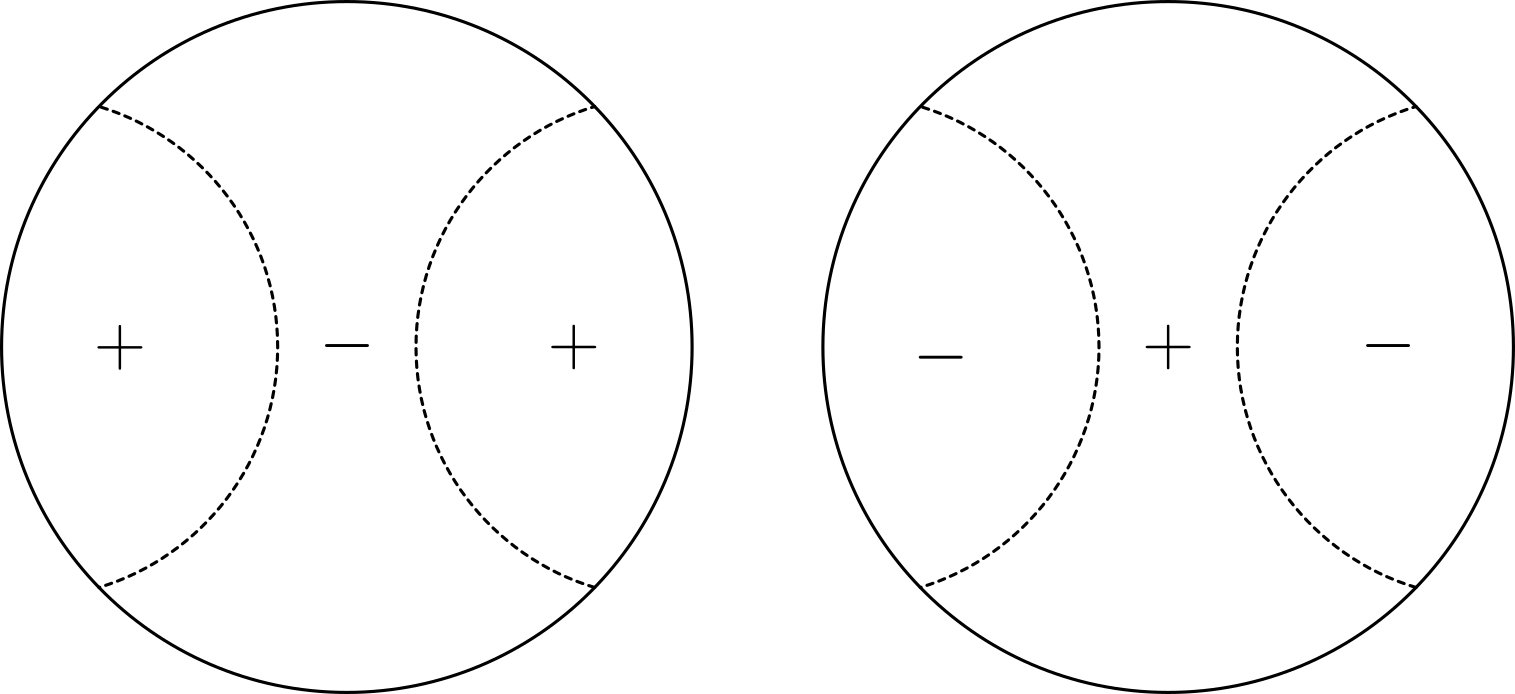}
     \caption{Two possibilities of dividing curves on $\mathbb{D}^2$ with t$(\partial \mathbb{D})= \minus 2$ \cite{Hon01}.}
    \label{d2}
\end{figure}

\begin{figure}
   \centering
    \includegraphics[width=2in]{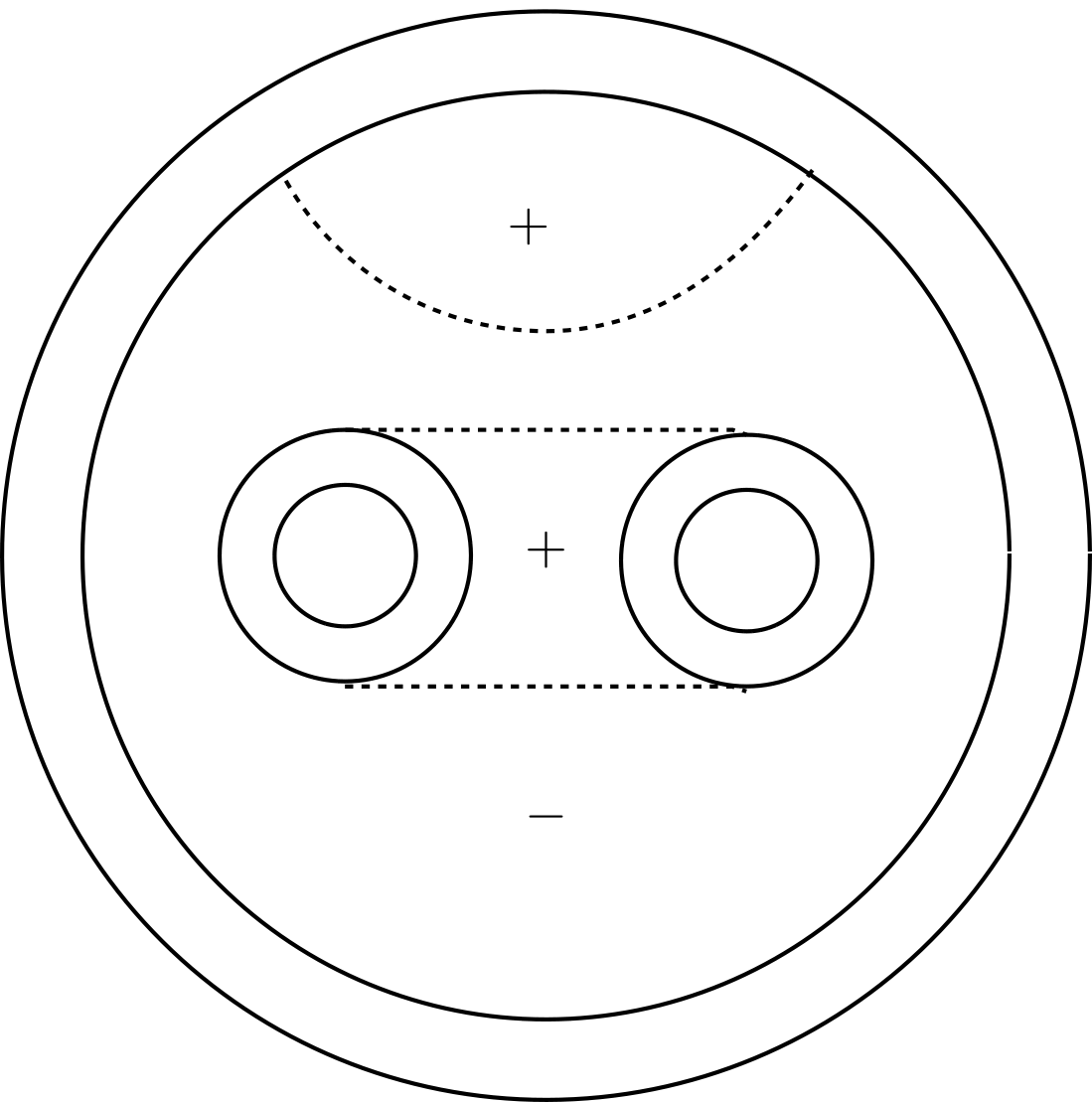}
     \caption{Possible dividing set on $\Sigma$.}
    \label{Z2}
\end{figure}\ 

\ 

We now look at the dividing curves on $\Sigma\times S^1=\Sigma'\times S^1\cup L_1\cup L_2\cup L_3.$ Let $A_i=\Sigma\cap L_i$. For the contact structure on $\Sigma\times S^1$ to be tight the dividing set of $A_3$ will have two arcs connecting the boundary components. There are two possible configurations depending on the sign of the basic slice $L_3$. 
The dividing set of $A_1$ and $A_2$ consists of a boundary parallel arc on the boundary component with dividing curves of the infinite slope. There are two possible configurations depending on the sign of the respective basic slice. This is shown in Figure \ref{s2}. Again dotted lines represent the dividing curves. Now let us look at the dividing curves on $\Sigma\times S^1$ separately for Case 2A and Case 2B as we did before.
\begin{figure}
   \centering
    \includegraphics[width=3in]{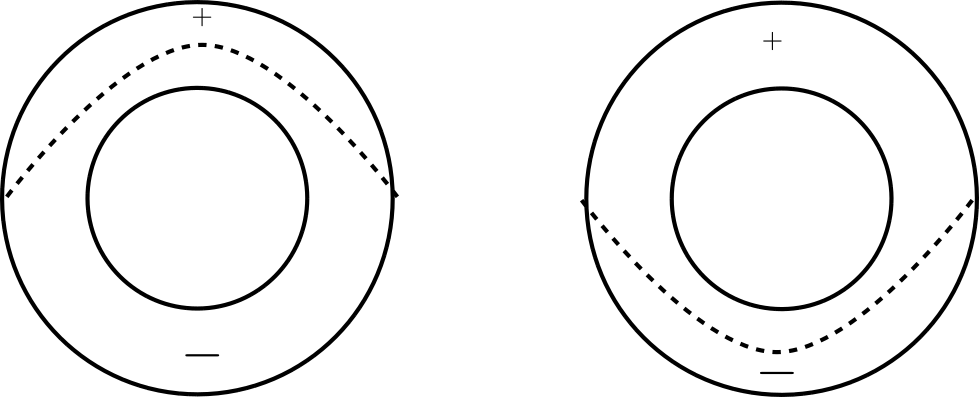}
     \caption{Positive (left) and Negative (right) sign configuration of dividing sets on $A_2$ and $A_1$.}
    \label{s2}
\end{figure}\ 

\

Case 2A: We fix one sign configuration for the contact structure on $\Sigma'\times S^1$ as shown in Figure \ref{Z1}. This fixes the sign configuration of the contact structure on $L_3$. In our case it is positive. We write $(\pm,\pm,\pm)$ to denote that the signs of the basic slices, $L_i$ for $i=1,2,3$ are positive/negative. Now we have a choice of sign for the contact structure on $L_1$ and $L_2$. This gives us four total configurations, given by: $(+,+,+), (-,-,+), (-,+,+), (+,-,+)$. It is proved in \cite{Hon02} (Lemma 5.1, 9th paragraph in the proof) that if all three basic slices have the same sign then we get an overtwisted disk. Hence $(+,+,+)$ corresponds to an overtwisted contact structure on $\Sigma'\times S^1$.\ 

\ 

We can have a contact structure with $(-,-,+)$ configuration. This corresponds to one tight contact structure on $\Sigma\times S^1$ with the slopes of the dividing curve on the boundary torus $0,0,-1.$ The dividing set is shown in Figure \ref{B1}. The tight contact structure on $\Sigma\times S^1$ corresponding to this dividing set on $\Sigma$ will be denoted by $\xi_B$. If we had started our Case 2A with the opposite signed configuration, then we would have the dividing set as shown in Figure \ref{B'} on $\Sigma$. This corresponds to a different tight contact structure because the relative Euler classes are different on $\Sigma\times S^1$. Let us call it $\xi_{B'}$.

\begin{figure}
   \centering
    \includegraphics[width=2in]{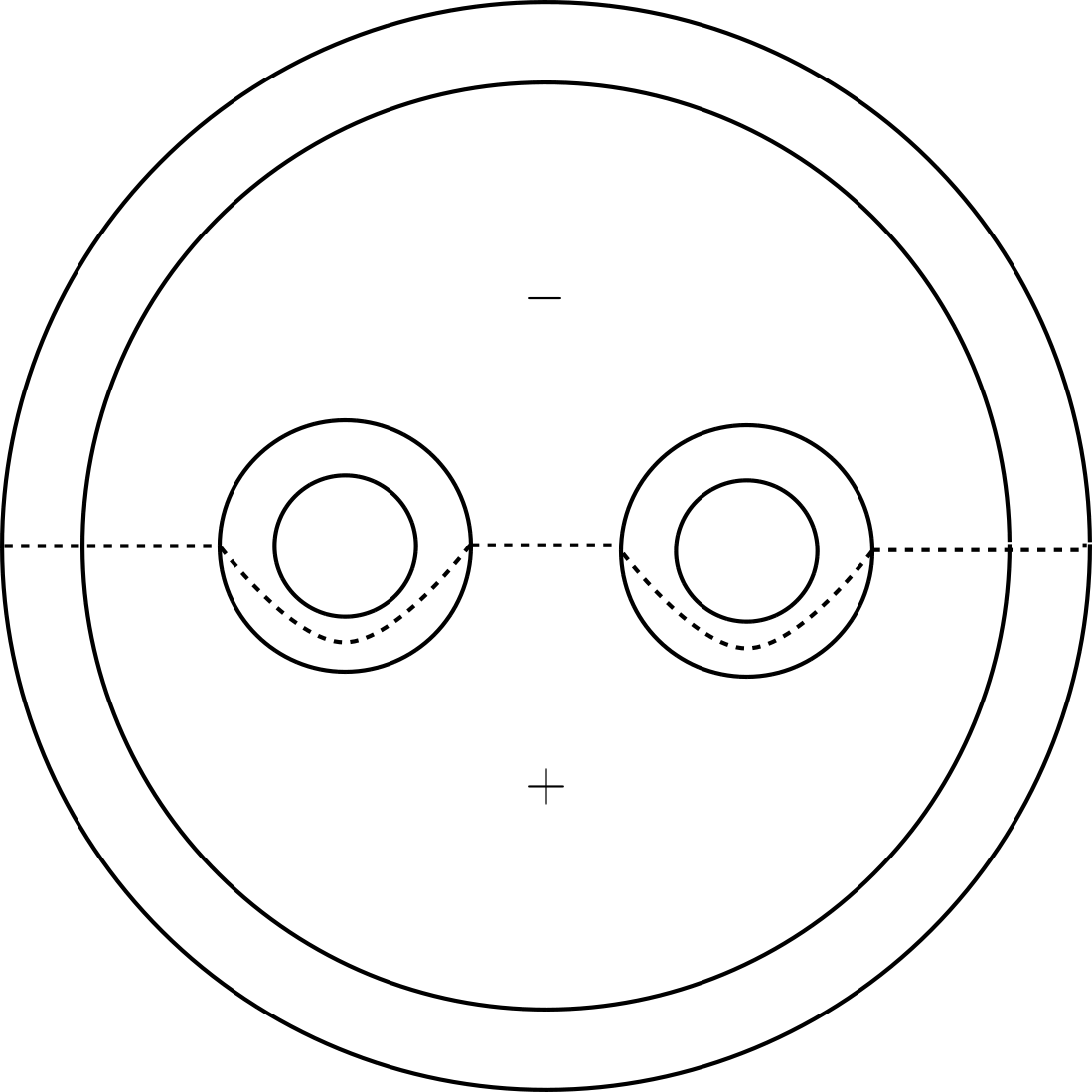}
     \caption{Dividing set of $\xi_B$ on $\Sigma$.}
    \label{B1}
\end{figure}

\begin{figure}
   \centering
    \includegraphics[width=2in]{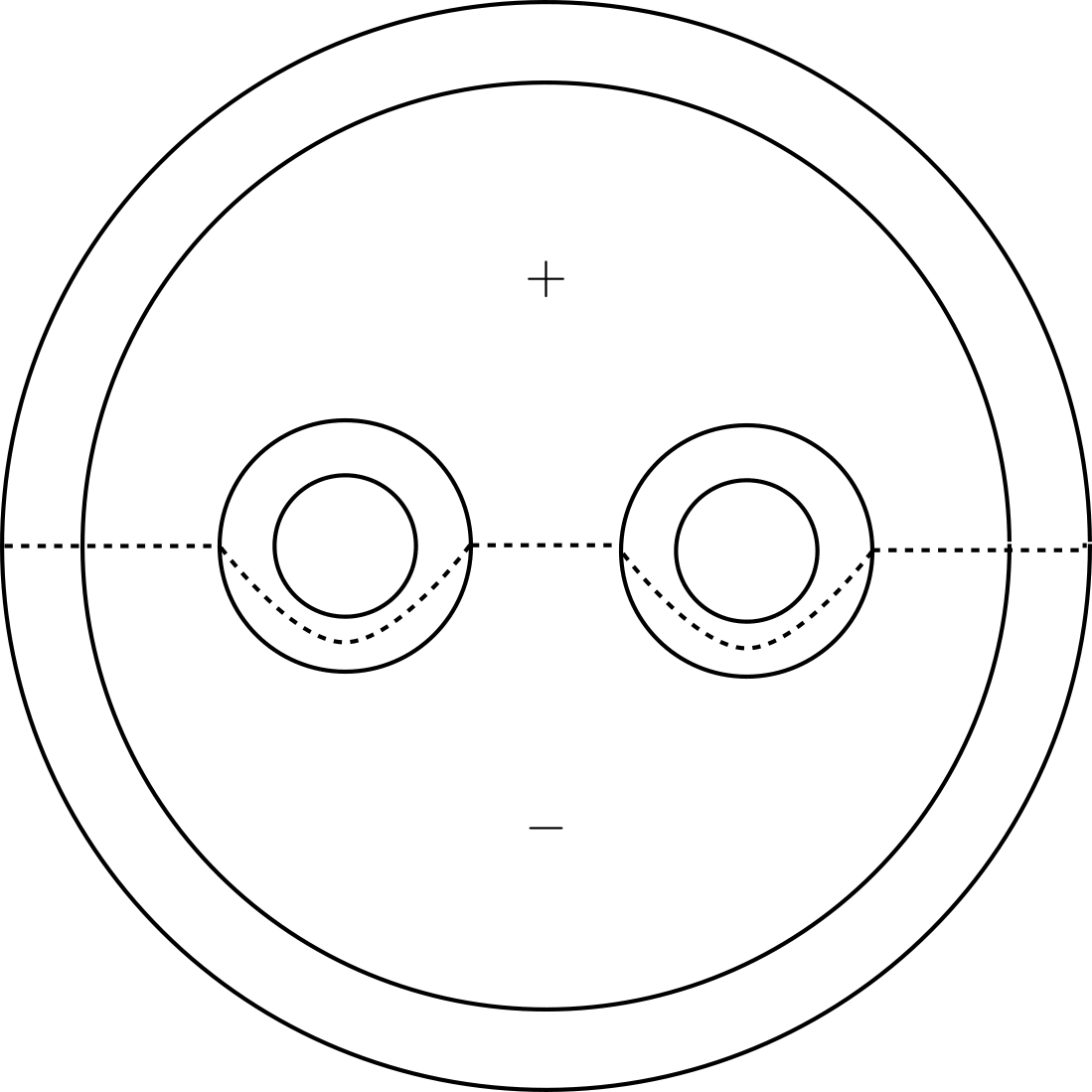}
     \caption{Dividing set of $\xi_{B'}$ on $\Sigma$.}
    \label{B'}
\end{figure}\ 

\ 

Similarly we can have $(-,+,+)$ and $(+,-,+)$ configuration. The dividing sets corresponding to these signed configurations are shown in Figure \ref{C, D}. Each of these corresponds to one tight contact structure on $\Sigma\times S^1$. We denote them by $\xi_C$ and $\xi_D$. If we had started our Case 2A with the opposite signed configuration, then we would have the dividing sets shown in Figure \ref{C', D'} on $\Sigma$. These correspond to different tight contact structures on $\Sigma\times S^1$. Let us call them $\xi_{C'}$ and $\xi_{D'}$. \ 

\ 

\begin{figure}
   \centering
    \includegraphics[width=3in]{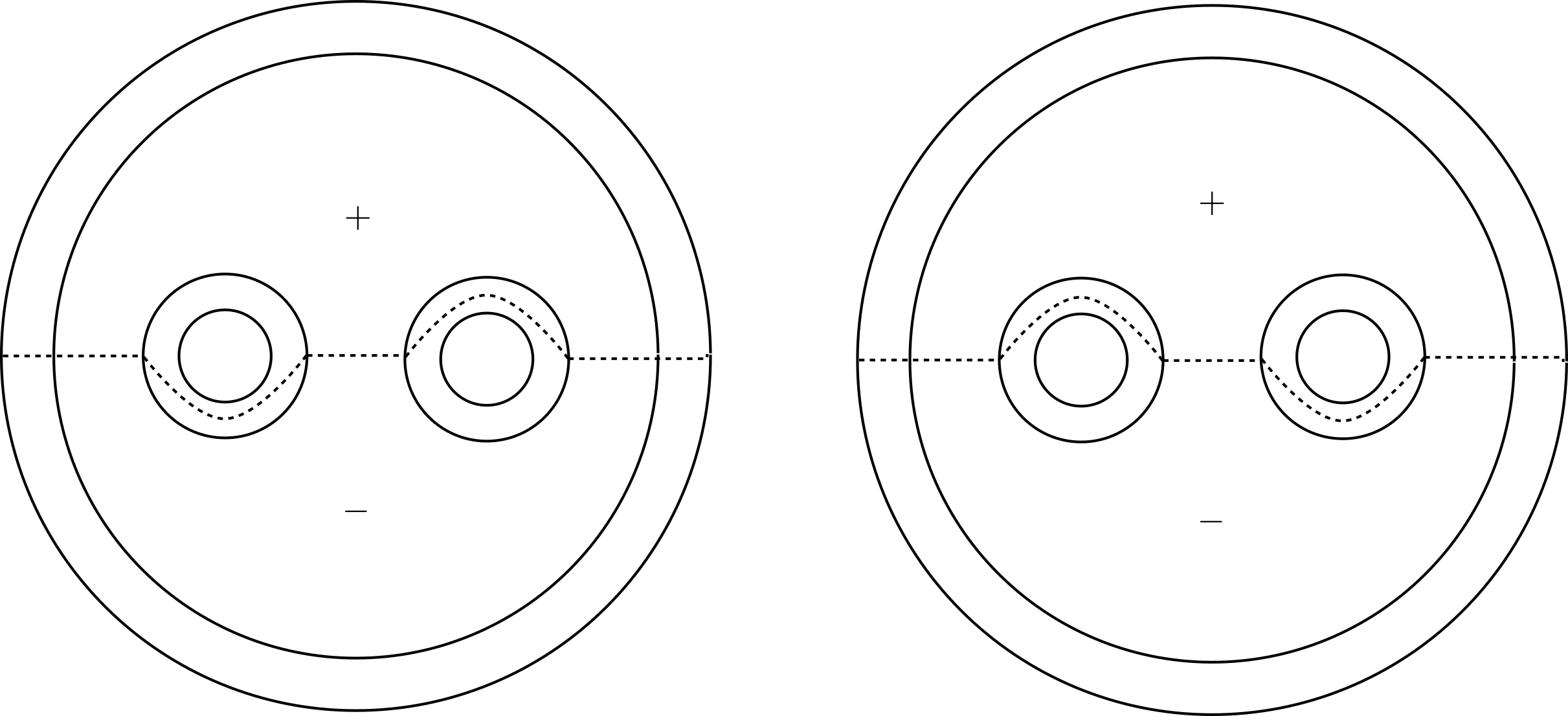}
     \caption{Dividing set of $\xi_C$ and $\xi_D$ on $\Sigma$.}
    \label{C, D}
\end{figure}

\begin{figure}
   \centering
    \includegraphics[width=3in]{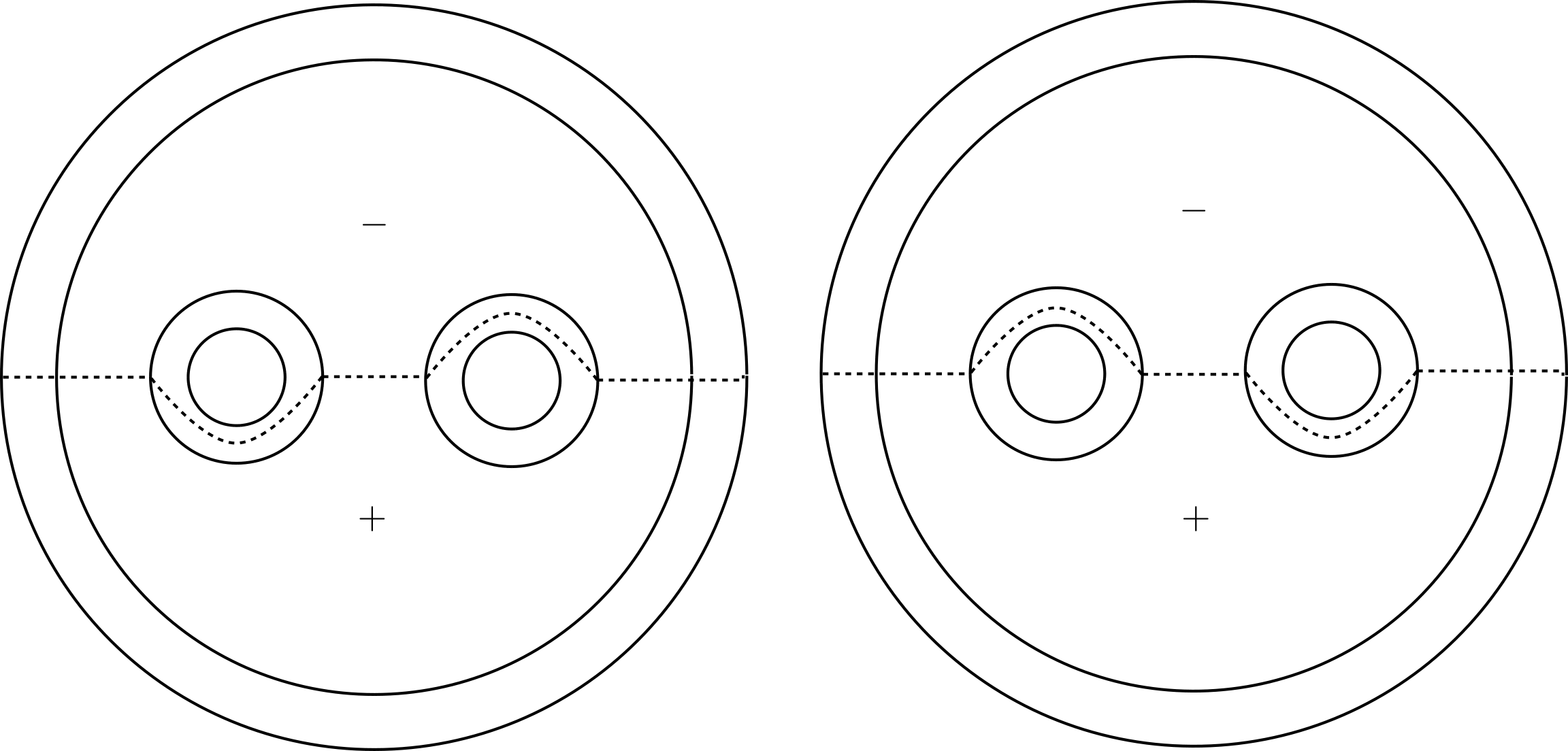}
     \caption{Dividing set of $\xi_{C'}$ and $\xi_{D'}$ on $\Sigma$.}
    \label{C', D'}
\end{figure}\

Case 2B: We have fixed one sign configuration for $\Sigma'\times S^1$ as shown in Figure \ref{Z2}. This fixes the sign configuration of $L_3$. Now we have a choice for sign-on $L_1$ and $L_2$. This gives us four total configurations shown in Figure \ref{dc}. The case where both the $L_1$ and $L_2$ have positive signs yields an overtwisted contact structure. We can see the dividing curves by dotted lines in Figure \ref{dc} A. The boundary of the overtwisted disk is outside the disk-bound by these dividing curves. Now consider the case when $L_1$ and $L_2$ have mixed signs. Then we get a bypass on the $L_i$ with a negative sign as shown in Figure \ref{dc}  B, C. Say the bypass is on $L_1$. We can attach this bypass and thicken our solid torus to get a solid torus with boundary slope $0$. Hence we have a toric annulus $L_1$ with boundary slopes $0$ and $\infty$ and an extension of this toric annulus is another toric annulus with boundary slopes $\infty$ and $0$. Hence we have too much radial twisting in $V_1'$ and hence the contact structures on $V_1'$ are overtwisted. Similarly, we get an overtwisted contact structure if we have a boundary-parallel dividing arc on boundary component two. Hence we are left with the case where both $L_1$ and $L_2$ have negative signs as shown in Figure \ref{dc} D. This contact structure on $\Sigma\times S^1$ is denoted by $\xi_E$. We would get the opposite signed configuration if we had started Case 2B with the other sign configuration. This will be denoted $\xi_E'$ and it is shown in Figure \ref{E'}. 
\begin{figure}
   \centering
    \includegraphics[width=3in]{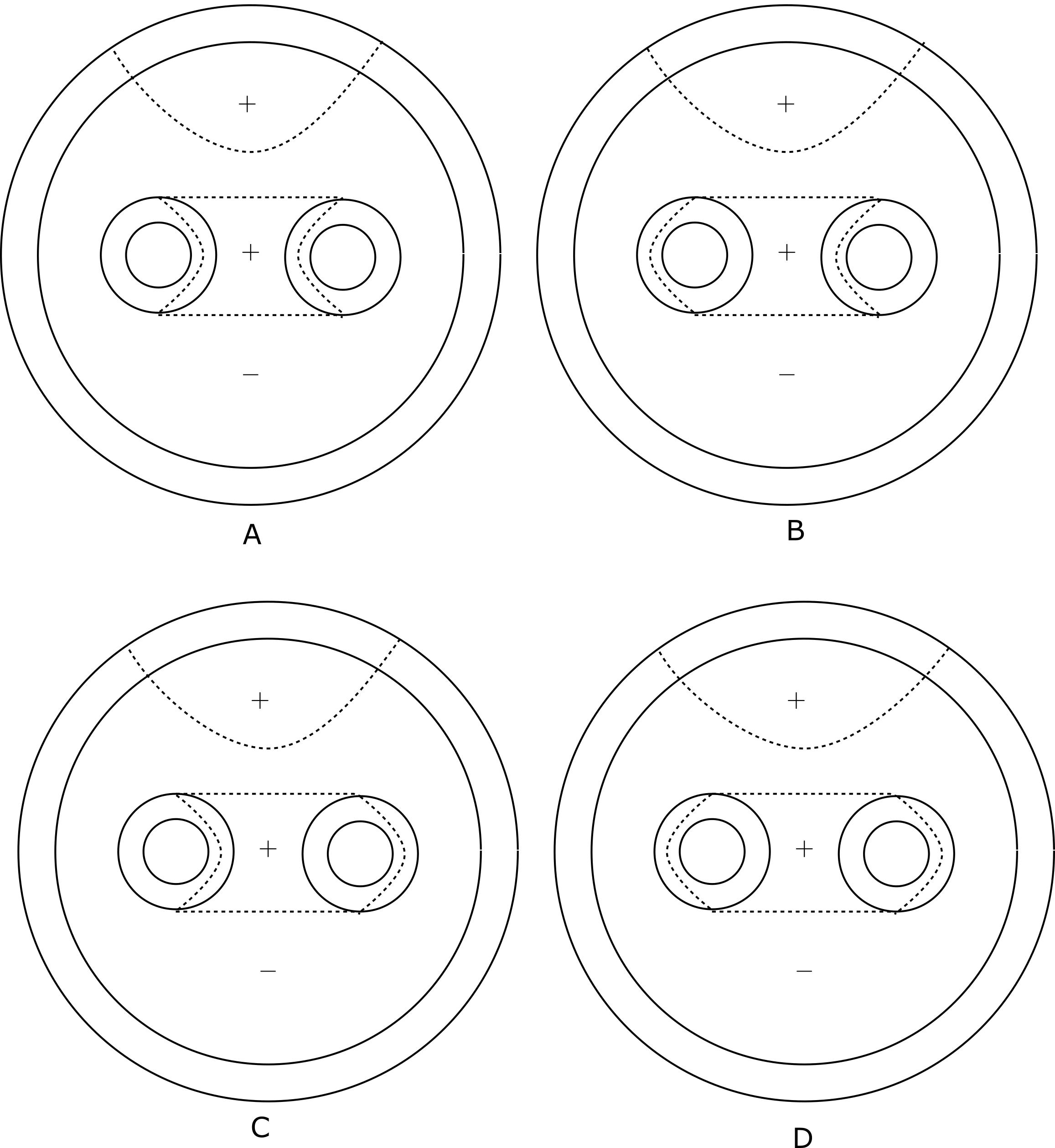}
     \caption{Four possible configurations of dividing curves on $\Sigma$.}
    \label{dc}
\end{figure}\ 

\begin{figure}
   \centering
    \includegraphics[width=2in]{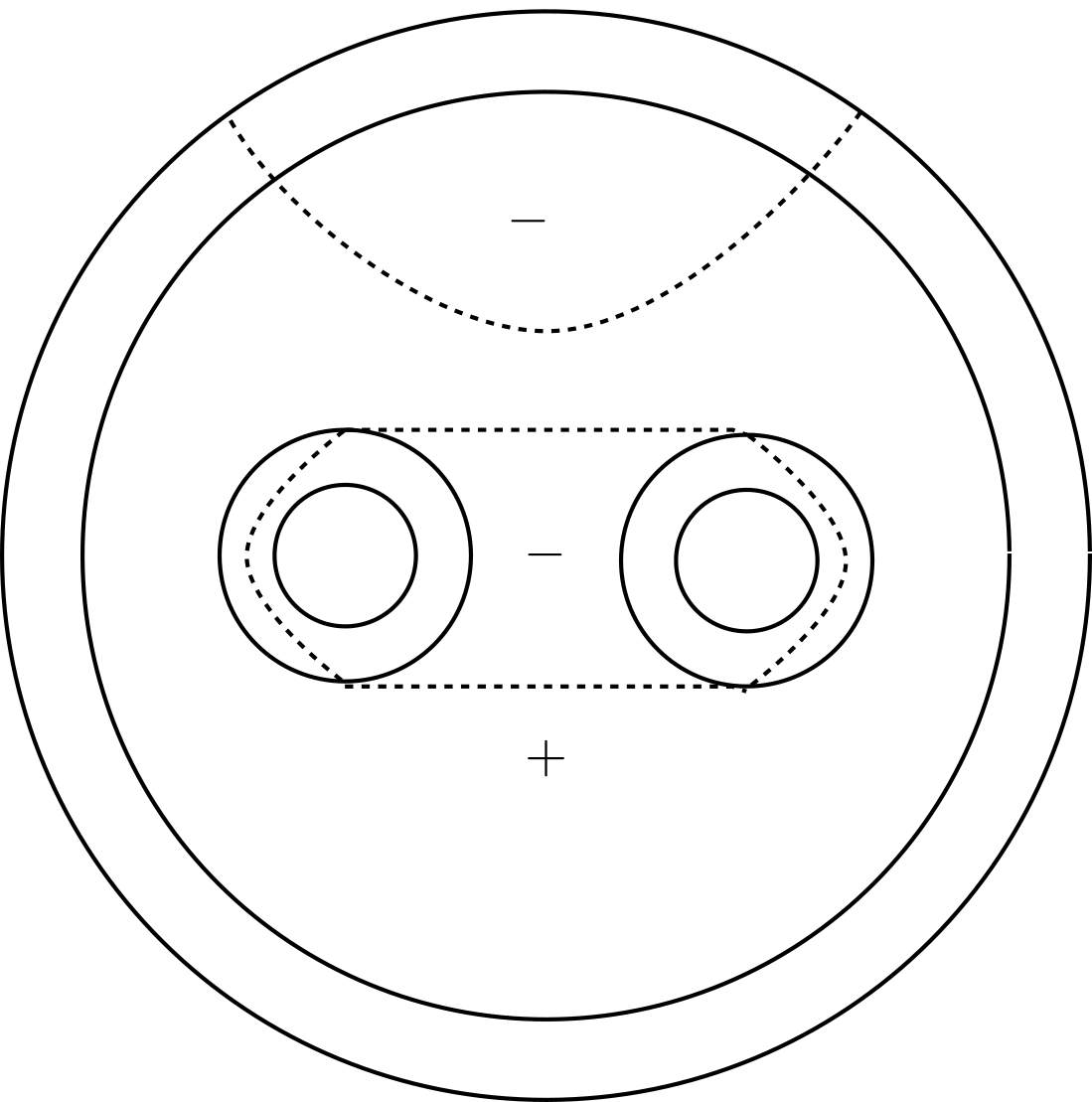}
     \caption{Dividing set of $\xi_{E'}$ on $\Sigma$.}
    \label{E'}
\end{figure}\

We have nine tight contact structures on $\Sigma\times S^1$ denoted by $\xi_A$, $\xi_B$, $\xi_{B'}$, $\xi_C$, $\xi_{C'}$, $\xi_D$, $\xi_{D'}$, $\xi_E$, $\xi_{E'}$. There is a unique tight contact structure on each $V_i$. Let us glue the two $\Sigma \times S^1$ along the toric annulus (refer section \ref{decompose}). When we are gluing the two $\Sigma\times S^1$ we are gluing using an orientation reversing diffeomorphism so we have to glue them using $T^2\times I$ with boundary slopes $-1$ and $+1$. After gluing, we get \{sphere with four holes\}$\times S^1$, denoted by $X\times S^1$, which we call $Y$. Let us look at all possible tight contact structures on $Y$ with zero Giroux torsion. Figure \ref{Combi1} shows all possible dividing curve configurations on the sphere with four holes which give potentially tight contact structures with zero Giroux torsion after gluing the two pairs of pants. The tight contact structure on $Y$ we get by gluing two pairs of pants with $\xi_E$\ and $\xi_{E'}$ is shown in Figure \ref{Combi2}. This contact structure has non-zero Giroux torsion depicted by the dividing curves on the toric annulus. They describe the twisting of contact planes on toric annulus $\times S^1$ from slope $0$ to $\infty$ to $0$ to $\infty$ to $0$, to $ \ infty$, which is one full twist. All other combinations of gluing two pairs of pants give us an obvious overtwisted disk in $Y$.  

\ 
 
When we glue the four solid tori to $Y$ with contact structure as determined by the dividing curves in pictures 5, 6, 7, and 8 of Figure \ref{Combi1} we get overtwisted contact structures on $M(0;-1/2,-1/2,-1/2,-1/2)$, indeed in all of these cases, there is a boundary parallel dividing curve, say $\gamma_1$, on the $X$. There is a boundary parallel torus, say $T_1$ containing $\gamma_1$. We denote this boundary torus as $T_0$ and the dividing curve on it as $\gamma_0$. 
Consider the $T^2\times [0,1]$ from $T_0$ to $T_1$. The dividing curves $\gamma_0$ and $\gamma_1$ have slope zero, hence we have a boundary parallel torus corresponding to every slope in this $T^2\times I$. So in particular we have a torus with slope $\frac{1}{2}$. Using $A_i^{-1} $ this corresponds to slope zero in the basis of the glued in solid torus. Hence this dividing curve bounds a meridional disk in the solid torus which is our overtwisted disk.  \ 

\ 

We use the relative Euler class (see Section 4.2 in \cite{Hon01}) to show that pictures 3 and 4 (as shown in Figure \ref{Combi1}) yield non-isotopic tight contact structures on $Y$. We denote the $i$th tight contact structure in Figure \ref{Combi1} as $\xi_i$. We have that the relative Euler class $e(\xi_3)=-2$ whereas $e(\xi_4)=2$. 
Diagrams 1 and 2 (as shown in Figure \ref{Combi1}) can be shown to be equivalent by section changes similarly to those discussed in Section \ref{gc}. So we get at most three potentially tight contact structures on $X \times S^1$.   \

\begin{figure}
   \centering
    \includegraphics[width=5in]{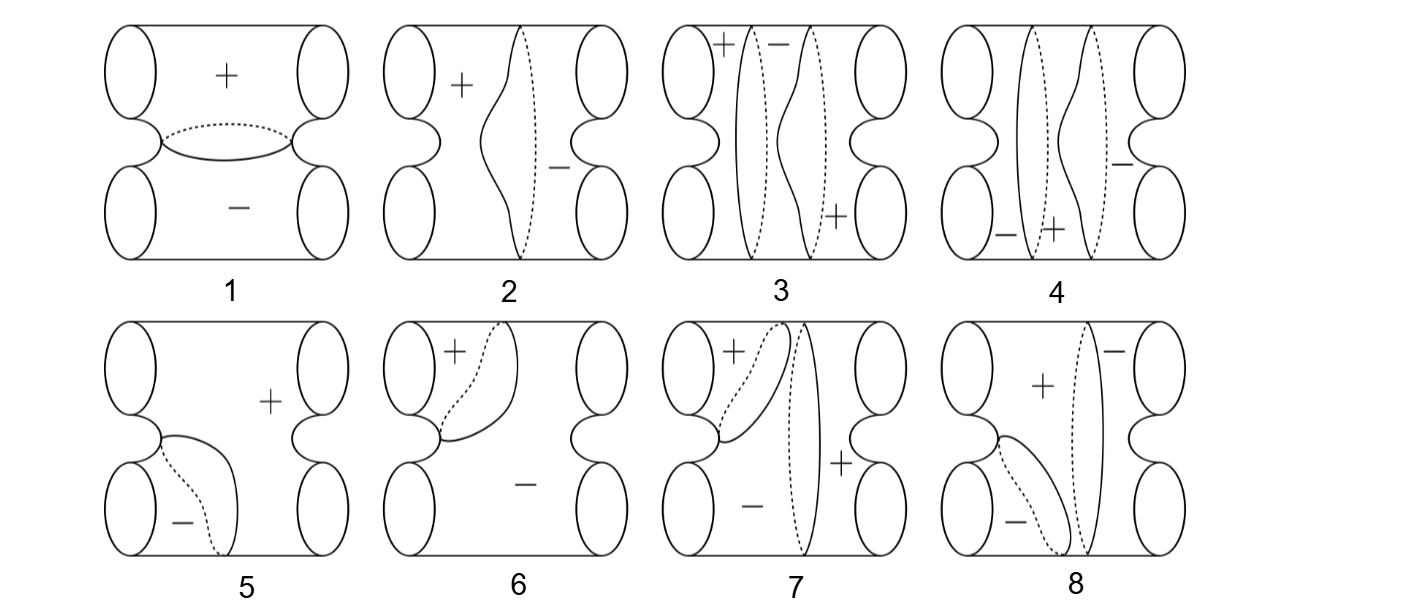}
     \caption{Dividing curve configurations on the sphere with four holes representing all possible tight contact structures on $Y$ with zero Giroux torsion.}
    \label{Combi1}
\end{figure}\

\begin{figure}
   \centering
    \includegraphics[width=2in]{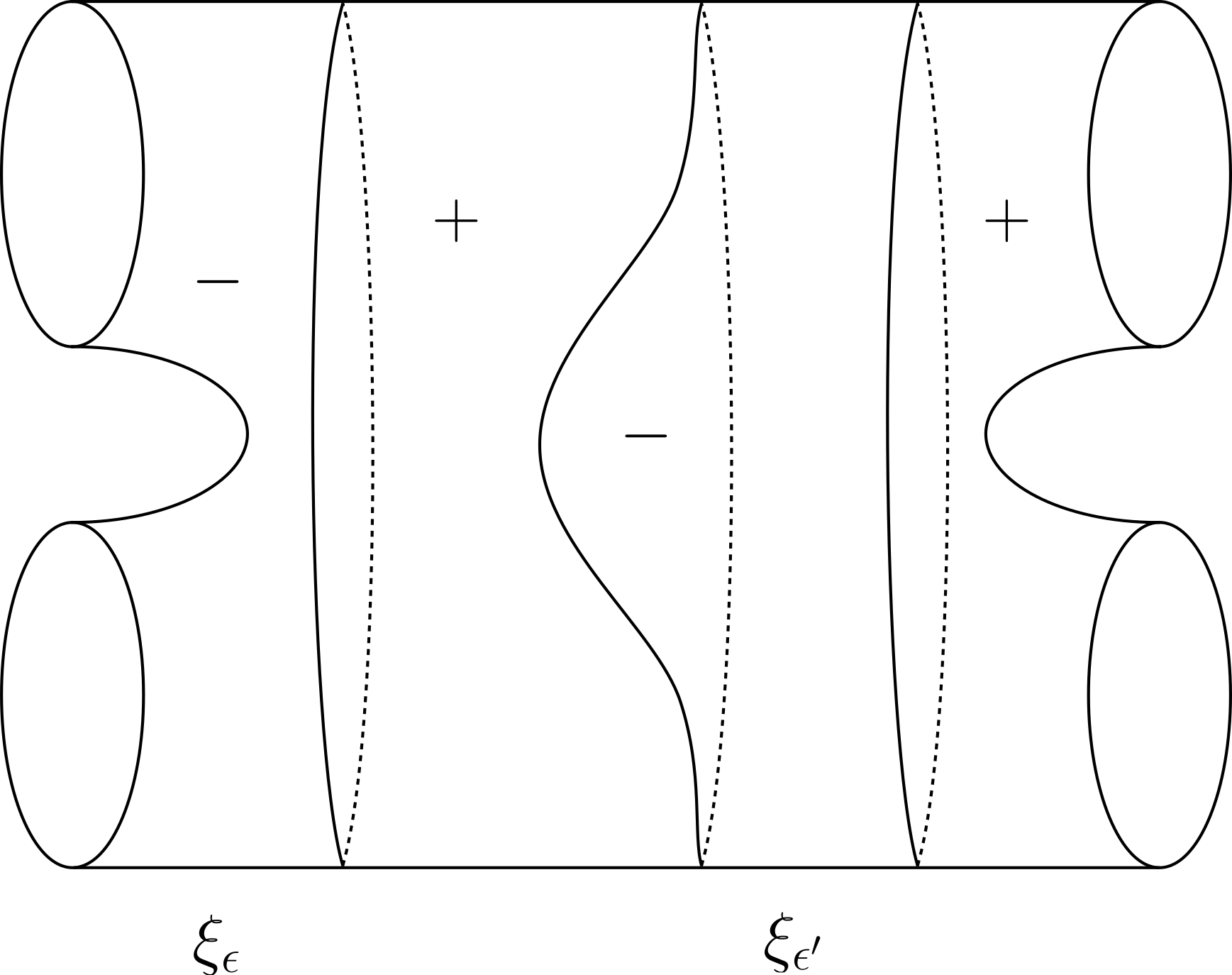}
     \caption{Dividing curve configuration showing non-zero Giroux torsion.}
    \label{Combi2}
\end{figure}\ 

\label{basic} Since the upper bound and lower bound (which we computed in Section \ref{Example}) both are three, we get that $|\pi_0(\Tight^{min}(M))|=3$. All of these are Stein fillable, since we get these three contact structures on $M$ by doing a $-1$ Legendrian surgery on a Legendrian link in ($S^3, \xi_{std}$). \ 

\ 


 Since $M$ is toroidal there are $\mathbb{Z}$ many non-isotopic tight contact structures on $M$ corresponding to integral Giroux torsion. It is proved by Gay in \cite{Gay} that tight contact structures that are strongly symplectically fillable have no Giroux torsion. Also, it is proved in \cite{Gei} that a weakly fillable tight contact structure on a rational homology sphere is a strongly fillable contact structure. The Seifert fiberd manifold $M$ is a rational homology sphere. Hence the tight contact structures on $M$ with non-zero Giroux torsion are not weakly fillable.
 \end{proof}
\subsection{General case}
\label{gc}

Consider the manifold $M=M(0,e_0;p_1/q_1,...,p_4/q_4)$ where $e_0\leq -4$ and $p_i, q_i \in \mathbb{Z}$ with $e_0\in \mathbb{Z}$, $\frac{p_i}{q_i}\in (0,1)\cap \mathbb{Q}$, and $(p_i,q_i)=1$, $\frac{-q_i}{p_i}=[a_0^i,a_1^i,...,{a_{m_i}}^{i}]$, where all  $a_j^i$'s are integers, $a_0^i=\lfloor \frac{-q_i}{p_i} \rfloor \leq -1,$ and $a_j^i\leq -2$ for $j\geq 1$. We define $p_j^i={-a_j^i}p_{j-1}^i-p_{j-2}^i$ for $j=0,1,...,m_i$ and $p_{-2}^i=-1$ and $p_{-1}^i=0$. Similarly we define $q_j^i=-a_j^i q_{j-1}^i-q_{j-2}^i$ for $j=0,1,...,m_i$ and $q_{-2}^i=-1$ and $q_{-1}^i=0$. The previous example (Section \ref{Example}) we have considered has $e_0(M)=-4$ and $\frac{-q_i}{p_i}=-2$. Once we have calculated the tight contact structures on this example case, it is computationally easy to generalise to the case of all manifolds $M$ with $e_0(M)\leq-4.$\ 
\\


\begin{figure}
   \centering
    \includegraphics[width=3in]{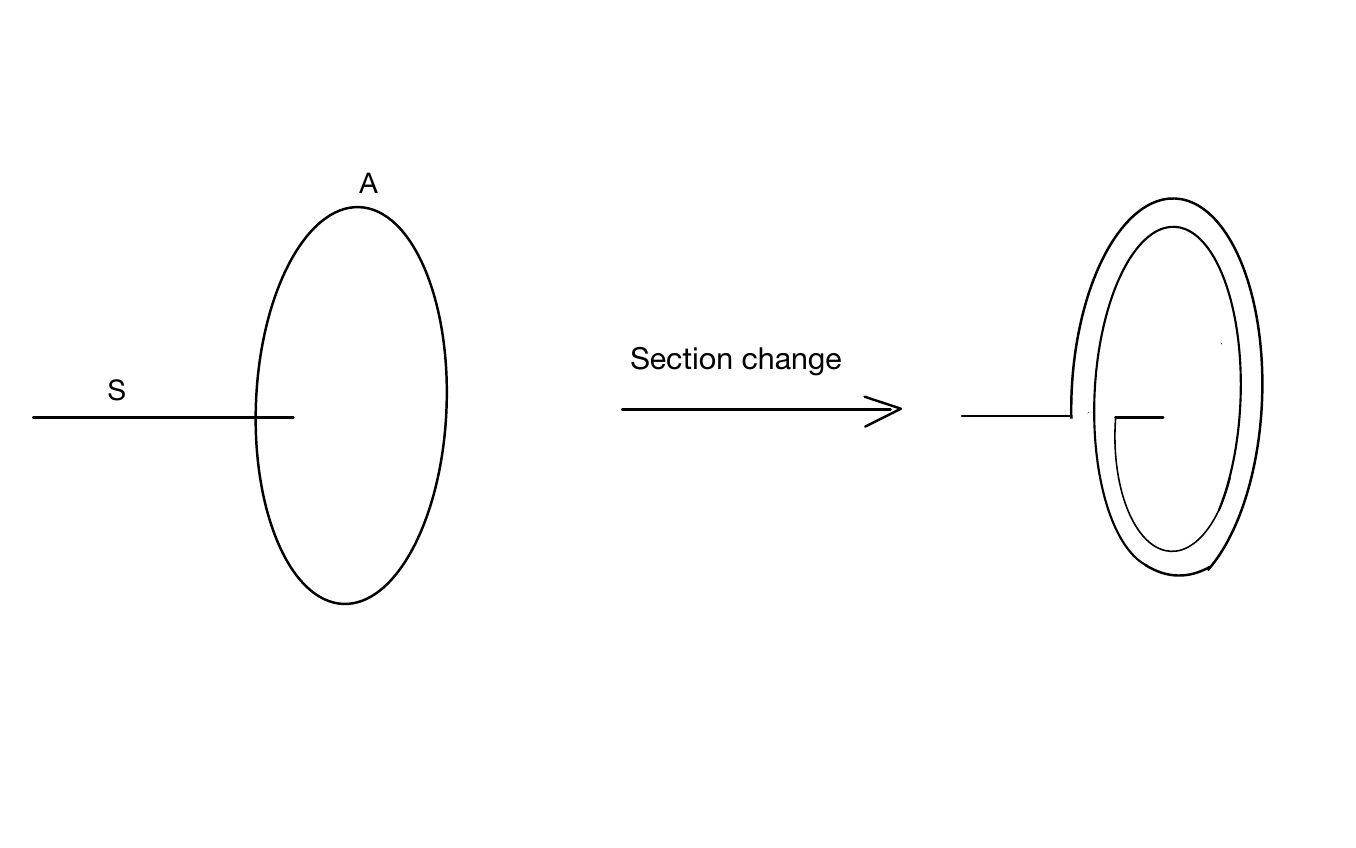}
   \caption{Section change to get diffeomorphic Seifert fibered manifold \cite{Hat}}
    \label{sechat}
\end{figure}

\begin{proof}[Proof of Theorem \ref{general thm}]
To prove this theorem we follow similar methods as we used for our example case (see Section \ref{Example}). We need an additional section change operation to show that some of the potential tight contact structures are isotopic. The operation that Hatcher (\cite[Proof of Proposition 4.1]{Hat}) demonstrates in Figure 4.1, is what we denote as section change, although Hatcher never uses this terminology. Here we briefly describe this operation for the reader's convenience. Since we have a circle bundle $M'\rightarrow B'$ we can choose a section. This choice determines the slope on the boundary of $M'$. Let's examine how the slopes on the boundary change when we opt for a different section. Say $a$ is an arc with endpoints in $\partial B'$. There is an annulus corresponding to this arc in $M'$, say $A$. The new section we choose winds $m$ times around $A$ as it crosses $A$ as illustrated in Figure \ref{sechat}. This effectively adds $m$ to the boundary slope at one end of $A$ and subtracts $m$ from the other boundary slope.\\ 

The surgery diagram for the construction of $M=M(0,e_0;p_1/q_1,...,p_4/q_4)$ is shown in Figure \ref{lg} on top left. We perform Rolfsen twists to get the surgery diagram on the top right. We do a slam dunk operation to obtain the diagram at the bottom. One can refer to \cite{GS} for both these operations. The number of Legendrian realisations for each of the unknots is $(a_i^j+1)$ or $(e_0(M)+1)$ based on their Thurston-Bennequin and rotation numbers. Then by Theorem \ref{LM} there are at least $|(e_0(M)+1)\Pi_{i=1}^4\Pi_{j=1}^{m_i}(a_j^i+1)|$ tight contact structures with zero Giroux torsion on $M$ up to contact isotopy. This gives a lower bound on the number of tight contact structures on $M$.
\begin{figure}
   \centering
    \includegraphics[width=6in]{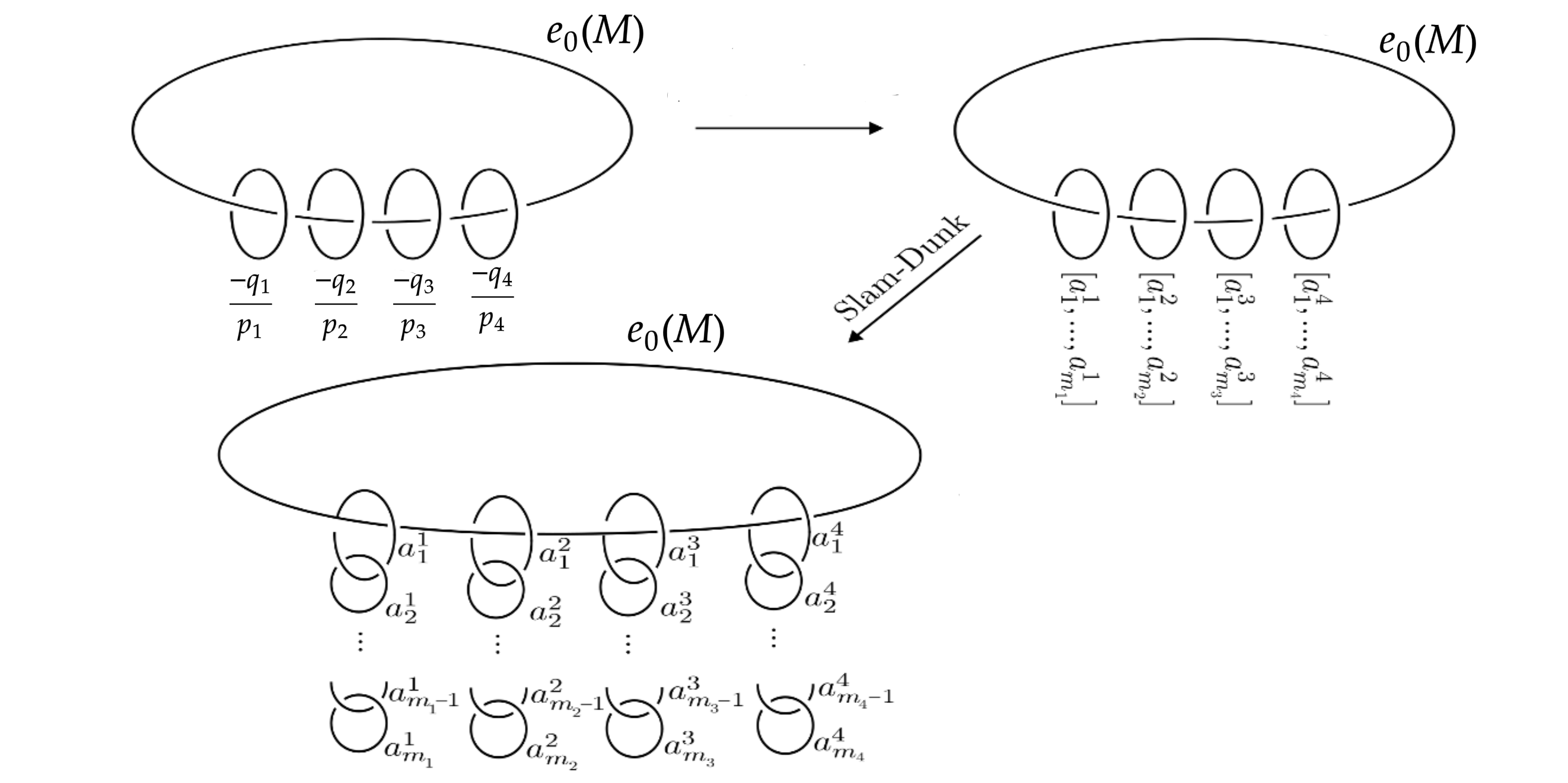}
   \caption{Surgery diagram representations of the manifold $M=M(0,e_0;p_1/q_1,...,p_4/q_4)$}
    \label{lg}
\end{figure}

\subsubsection{Upper Bound}

We start with the decomposition of the Seifert fibered manifold as stated previously in the example case. The manifold $M=M(0,e_0;p_1/q_1,...,p_4/q_4)$ has incompressible tori. We are going to start by counting tight contact structures with zero Giroux torsion on $M$. 

\ 


Using the same construction and notations from the example case as in Figure \ref{U1} we have that the two attaching maps $A_i:\partial V_i\rightarrow -\partial (\Sigma\times S^1)_i$ is given by 
$\left(\begin{array}{ll}
p_i & u_i\\
q_i & v_i
\end{array}\right)\in SL_2(\mathbb{Z})$ for $i=1,2$ where $u_i=p_{m_{i}-1}^i$ and $v_i=q_{m_{i}-1}^i$. Using the flexibility of Legendrian ruling we assume that the ruling slope of $\partial(\Sigma\times S^1)_{i}$ for $i=1,2,3$ is infinite. 
Assume that the fibers $F_i$ are simultaneously isotoped to Legendrian curves such that their twisting numbers are particularly negative. For $i=1,2$ we have $A_i.(n_i,1)^T=(n_ip_i+u_i,n_iq_i+v_i)$. We denote the slope of the dividing curve on $-\partial(\Sigma\times S^1)_i$ by $s_i=\displaystyle\frac{n_iq_i+v_i}{n_ip_i+u_i}=\displaystyle\frac{q_i}{p_i}+\displaystyle\frac{1}{p_i(n_ip_i+u_i)}$. \ 

\ 

Note that the manifolds, $M=M(0,e_0;p_1/q_1,...,p_4/q_4)$, we are working with are L-spaces (see Theorem 1.1 \cite{LS}). We look at Seifert fibered manifold $M'=(0;-q_1/p_1,...,-q_3/p_3)$ which is an L-space with $e_0\leq-2$. Any tight contact structures on $M'$ has a Legendrian curve with twisting number $-1$ in the $\Sigma \times S^1$ (Using Corollary 5.2 from \cite{G}). We call this curve $L$ and its neighborhood $V$. We can construct $M$ from $M'$ by doing a surgery on a fiber which is not in $V$. Hence we have a Legendrian curve with twisting number $-1$ in our manifold $M$. We can assume that the Legendrian ruling slope on $\partial(\Sigma\times S^1)_{i}$ for $i=1,2$ is $\infty$. Take an annulus $A_i$ between $\partial V$ and $\partial(\Sigma\times S^1)_{i}$ for $i=1,2$. There might be some bypasses on $\partial(\Sigma\times S^1)_{i}$ by the imbalance principle. After attaching these bypasses the slope of the dividing curve on $\partial(\Sigma\times S^1)_{i}$ is $\lfloor\frac{q_i}{p_i}\rfloor$ for $i=1,2$. One can similarly prove that  the dividing curve on $\partial(\Sigma\times S^1)_{i}$ is $\lfloor\frac{q_i}{p_i}\rfloor$ for $i=3,4$.\ 

\ 



\

\subsubsection{Combining tight contact structures on the basic blocks}
After we glue the two $\Sigma\times S^1$ across the toric annulus we get $X\times S^1$ which we call $Y'$. The slope of the dividing curves on the boundary tori of $Y'$ is $\lfloor \frac{q_i}{p_i} \rfloor $ for $i=1,2,3,4$. We take a suitable diffeomorphism of $X\times S^1$ to normalize the boundary slopes to be $\Sigma_{i=1}^4 \lfloor \frac{q_i}{p_i} \rfloor = s$ on one of the boundary torus and $0$ on the other three boundary tori. This amounts to a section change. We denote this $X\times S^1$ by $Y$. The number of tight contact structures up to contact isotopy on the manifold before the section change is the same as the number of tight contact structures up to contact isotopy on the manifold after the section change. Since $s\geq 0$ we can decompose $Y$ in a $X\times S^1$ with all four boundary slopes $0$ and a toric annulus with boundary slopes $0$ and $s$. This is shown in Figure \ref{Y1}.

 \begin{figure}
   \centering
    \includegraphics[width=6in]{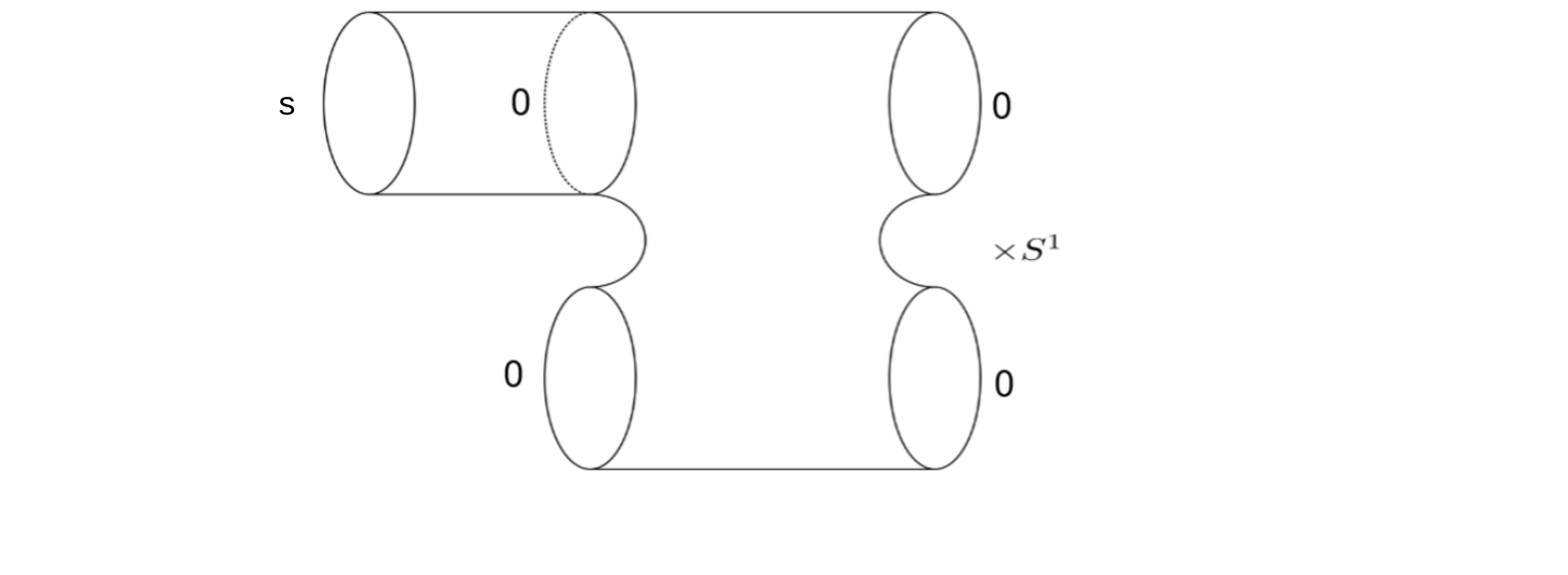}
     \caption{Decomposition of Y.}
    \label{Y1}
\end{figure}

\ 

There are three tight contact structures with zero Giroux torsion on $X\times S^1$ with all four boundary slopes $0$ (as proved in Section \ref{basic}). There are $(s+1)$ tight contact structures on a toric annulus with boundary slopes $0$ and $s$ \cite{Hon01}. When we glue this toric annulus on $X\times S^1$, we need the signs of the regions on the boundary to match. There are $3s+3$ combinations but only $2s+3$ are possible because the signs of the regions should match at the gluing. So we get an upper bound of $2s+3$ on the number of potential tight contact structures. 
 \ 

\ 

We write the boundary slopes on $Y$ as $(s_1, s_2, s_3, s_4)$ where the slope $s_i$ is $s(\partial Y)_i$. Say that our boundary slopes on Y are $(2,0,0,0)$. We will show that the two dividing curve configurations shown on a $X$ in Figure \ref{sc1} (top right and top left) represent two different sections (as in Prop. 2.1 in \cite{Hat}) of the same contact structure on $X\times S^1$. We start with a tight contact structure that has a positive and negative bypass on the boundary component with boundary slope $2$. This dividing curve configuration is shown on the top left. Consider an annulus $A$ between boundary component having slopes $2$ and $0$, as shown in Figure \ref{sc1}. We do the section change (as in Prop. 2.1 in \cite{Hat}) across annulus $A$ to get $Y ''$ with boundary slopes $(1,1,0,0)$. We obtain the dividing curves on the new section of $Y''$ as follows: we start with our old section on $X\times S^1$ with boundary slopes $(2,0,0,0)$ and draw the dividing curves on the $X$ as well as on all four boundary tori. Using the construction as in Prop. 2.1 in \cite{Hat} we draw our new section (with boundary slopes $(1,1,0,0)$) in our old $X\times S^1$ (with boundary slopes $(2,0,0,0)$). We mark all intersections of the dividing curves with the boundary of the new section. Since the new $X$ is embedded in old $X\times S^1$, we start drawing the dividing curve at one of the boundary components with slope $1$ and follow through in our new $X$ across the old section. We get the dividing curves connecting the two boundary components with boundary slopes $1$. This dividing curve configuration is shown in Figure \ref{sc1} at the bottom. Next, we look at the dividing curve configuration which has the two same signed bypasses on the boundary component with slope $2$ and half a twist across the incompressible torus. This dividing curve configuration is shown on the top right. We do the section change along annulus $A'$ between boundary components having slopes $2$ and $0$ which goes across the half twist. We get the dividing curves (using the same process as above) connecting the two boundary components with boundary slopes $1$. After the section change, we get the dividing curve configuration as shown in Figure \ref{sc1} at the bottom. Hence one can get the top right dividing curve configuration from the top left dividing curve configuration by section change on the $X\times S^1$. Hence the top right and the top left dividing curve configurations are two different sections of the same tight contact structure on $Y$.

 \begin{figure}
   \centering
    \includegraphics[width=2.5in]{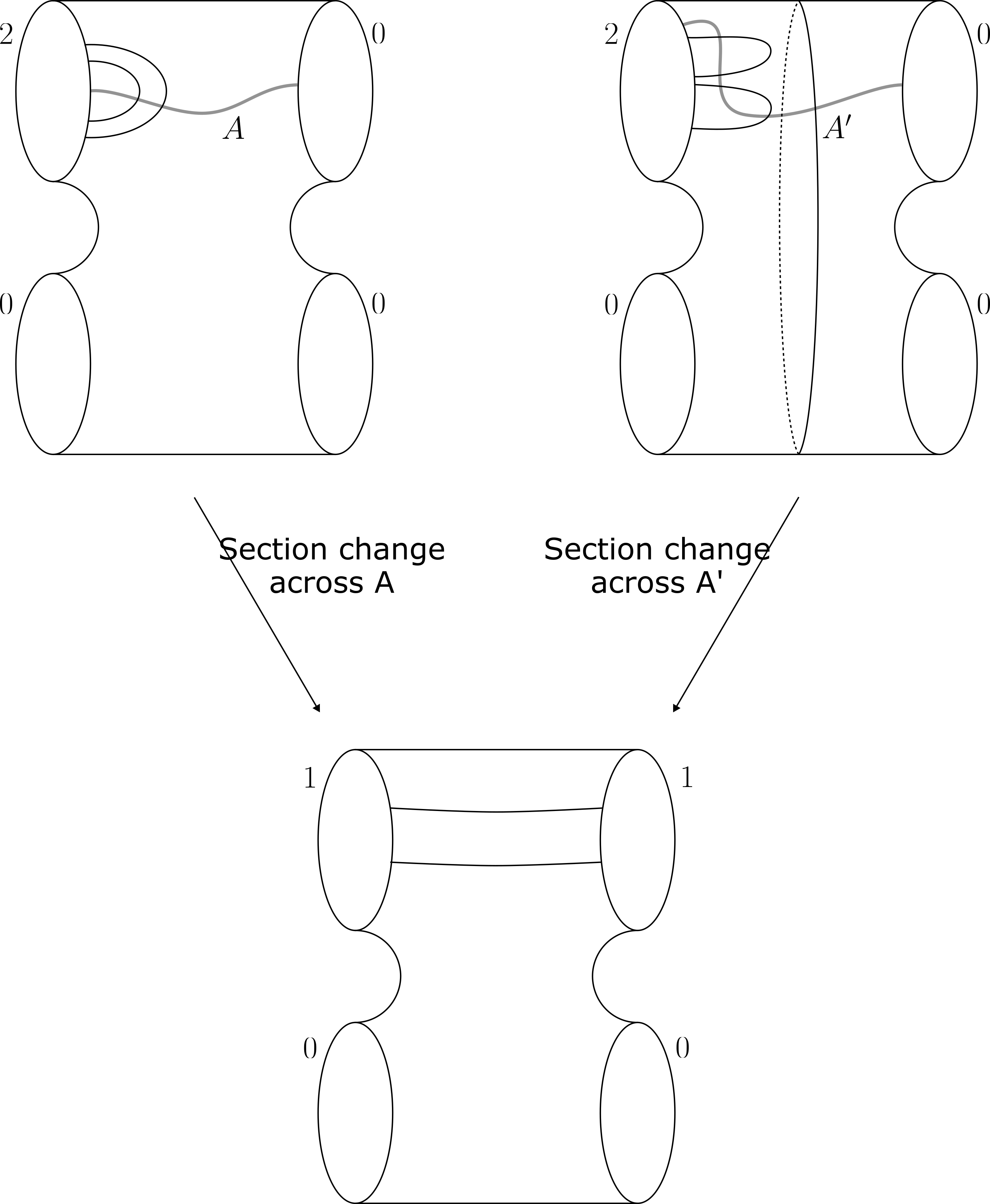}
     \caption{Different sections of a tight contact structure on $X\times S^1$.}
    \label{sc1}
\end{figure}

\ 

We had the upper bound $2s+3$ of potential tight contact structures. By the section change described above, $s$ many tight structures are counted twice: one represented by a section with a positive and a negative bypass and the other section with the same signed bypass and half a twist across the incompressible torus. Hence we get a tighter upper bound of $s+3$. Since $s=\Sigma_{i=1}^4 \lfloor \frac{q_i}{p_i} \rfloor$ and we get that $s+3=|e_0(M)+1|$. Hence the number of the potential tight contact structures on $X\times S^1$ is bounded above by $|e_0(M)+1|$.


\ 

The solid torus $V_i$ has a boundary slope of $-\frac{q_i-\lfloor\frac{q_i}{p_i}\ \rfloor p_i}{v_i-\lfloor\frac{q_i}{p_i}\rfloor u_i}$ = $-\frac{q_i + (a_0^i +1) p_i}{v_i+ (a_0^i) u_i}$. With this boundary slope, there are exactly $\Pi_{j=1}^{m}(a_j^i+1)$ tight contact structures on $V_i$ (Theorem 2.3 in \cite{Hon01}). Thus, up to contact isotopy there are at most $|(e_0(M)+1)|\Pi_{i=1}^4\Pi_{j=1}^{m}(a_j^i+1)$ tight contact structures with zero Giroux torsion on $M=M(0;-q_1/p_1,-q_2/p_2,-q_3/p_3,\allowbreak -q_4/p_4)$ with $e_0(M)\leq -4$. 

\ 

Since the upper bound and lower bound match, we get $|\pi_0(\Tight^{min}(M))|=|(e_0(M)+1)|\Pi_{i=1}^4\Pi_{j=1}^{m}(a_j^i+1).$ All of these are Stein fillable since we get these contact structures on $M$ by doing a $-1$ Legendrian surgery on a Legendrian link in ($S^3, \xi_{std}$). \

\

Since $M$ is toroidal there are $\mathbb{Z}$ many non-isotopic tight contact structures on $M$ corresponding to integral Giroux torsion. As seen in the example case the tight contact structures on $M$ with non-zero Giroux torsion are not weakly fillable.
\end{proof}
  
  \subsection{Concluding remarks}
  One can try to classify tight contact structures on Seifert fiberd manifold $M=M(0,e_0;p_1/q_1,...,p_4/q_4)$ where $e_0> -4$ and with,  $(p_i,q_i)=1$. We can construct tight contact structures with zero Giroux torsion by Legendrian surgery to get a lower bound on the number of tight contact structures. To get the upper bound on the number of tight contact structures we use convex surface theory. These two bounds don't match. Currently, the author is unable to find any contact isotopy between the contact structures found by convex surface theory or find an invariant to say that those are non-isotopic contact 
  
\bibliographystyle{plain}
\bibliography{references.bib}

\begin{thebibliography}{10}

\bibitem{Ben}
Daniel Bennequin.
\newblock Entrelacements et {\'e}quations de pfaff.
\newblock {\em Ast{\'e}risque}, 107:87, 1983.

\bibitem{CGH}
Vincent Colin, Emmanuel Giroux, and Ko~Honda.
\newblock On the coarse classification of tight contact structures.
\newblock {\em arXiv preprint math/0305186}, 2003.

\bibitem{Eli}
Yakov Eliashberg.
\newblock Classification of overtwisted contact structures on 3-manifolds.
\newblock {\em Inventiones mathematicae}, 98(3):623--637, 1989.

\bibitem{Eli02}
Yakov Eliashberg.
\newblock Topological characterization of stein manifolds of dimension $>$ 2.
\newblock {\em International Journal of Mathematics}, 01(01):29--46, 1990.

\bibitem{Eli03}
Yakov Eliashberg.
\newblock Contact 3-manifolds twenty years since j. martinet's work.
\newblock In {\em Annales de l'institut Fourier}, volume~42, pages 165--192,
  1992.

\bibitem{EG}
Yakov Eliashberg and Mikhael Gromov.
\newblock Convex symplectic manifolds.
\newblock {\em Several complex variables and complex geometry}, pages 135--162,
  1991.

\bibitem{Etn01}
John~B. Etnyre.
\newblock Introductory lectures on contact geometry, 2002.

\bibitem{Etnconvex}
John~B Etnyre.
\newblock Convex surfaces in contact geometry: class notes.
\newblock {\em Lecture notes available on: http://people. math. gatech.
  edu/\~{} etnyre/preprints/papers/surfaces. pdf}, 2004.

\bibitem{EH}
John~B. Etnyre and Ko~Honda.
\newblock On connected sums and {L}egendrian knots.
\newblock {\em Adv. Math.}, 179(1):59--74, 2003.

\bibitem{Gay}
David~T Gay.
\newblock Four-dimensional symplectic cobordisms containing three-handles.
\newblock {\em Geometry \& Topology}, 10(3):1749--1759, 2006.

\bibitem{Gei}
Hansj\"{o}rg Geiges.
\newblock {\em An introduction to contact topology}, volume 109 of {\em
  Cambridge Studies in Advanced Mathematics}.
\newblock Cambridge University Press, Cambridge, 2008.

\bibitem{G}
Paolo Ghiggini.
\newblock On tight contact structures with negative maximal twisting number on
  small seifert manifolds, 2007.

\bibitem{GH}
Paolo Ghiggini and Ko~Honda.
\newblock Giroux torsion and twisted coefficients.
\newblock {\em arXiv preprint arXiv:0804.1568}, 2008.

\bibitem{GLS01}
Paolo Ghiggini, Paolo Lisca, and Andr{\'a}s Stipsicz.
\newblock Tight contact structures on some small seifert fibered 3-manifolds.
\newblock {\em American journal of mathematics}, 129(5):1403--1447, 2007.

\bibitem{GLS}
Paolo Ghiggini, Paolo Lisca, and Andr{\'a}s~I Stipsicz.
\newblock Classification of tight contact structures on small seifert
  3-manifolds with.
\newblock {\em Proceedings of the American Mathematical Society}, pages
  909--916, 2006.

\bibitem{GS}
Paolo Ghiggini and Stephan Sch{\"o}nenberger.
\newblock On the classification of tight contact structures.
\newblock {\em arXiv preprint math/0201099}, 2002.

\bibitem{Gir02}
Emmanuel Giroux.
\newblock Convexity in contact topology.
\newblock {\em Comment. Math. Helv}, 66(4):637--677, 1991.

\bibitem{Gir01}
Emmanuel Giroux.
\newblock G$\backslash$'eom$\backslash$'etrie de contact: de la dimension trois
  vers les dimensions sup$\backslash$'erieures.
\newblock {\em arXiv preprint math/0305129}, 2003.

\bibitem{Gom}
Robert~E. Gompf.
\newblock Handlebody construction of stein surfaces, 1998.

\bibitem{Hat}
Allen Hatcher.
\newblock Notes on basic 3-manifold topology, 2007.

\bibitem{Hon01}
Ko~Honda.
\newblock On the classification of tight contact structures i.
\newblock {\em arXiv preprint math/9910127}, 1999.

\bibitem{Hon02}
Ko~Honda.
\newblock On the classification of tight contact structures ii.
\newblock {\em Journal of Differential Geometry}, 55(1):83--143, 2000.

\bibitem{Hon}
Ko~Honda.
\newblock Notes for math 599: contact geometry.
\newblock {\em Lecture Notes available at http://www-bcf. usc. edu/\~{}
  khonda/math599/notes. pdf}, 2019.

\bibitem{HKM}
Ko~Honda, William~H Kazez, and Gordana Mati{\'c}.
\newblock Convex decomposition theory.
\newblock {\em International Mathematics Research Notices}, 2002(2):55--88,
  2002.

\bibitem{LM01}
P~Lisca and G~Mati{\'c}.
\newblock Tight contact structures and {Seiberg-Witten} invariants.
\newblock {\em Invent. Math.}, 129(3):509--525, aug 1997.

\bibitem{LM}
Paolo Lisca and G~Mati{\'c}.
\newblock Stein 4-manifolds with boundary and contact structures.
\newblock {\em Topology and its Applications}, 88(1-2):55--66, 1998.

\bibitem{LS}
Paolo Lisca and Andr{\'a}s~I Stipsicz.
\newblock Ozsv{\'a}th-szab{\'o} invariants and tight contact 3-manifolds, iii.
\newblock {\em JOURNAL OF SYMPLECTIC GEOMETRY}, 2007.

\bibitem{Mar}
Jean Martinet.
\newblock Formes de contact sur les vari{\'e}t{\'e}s de dimension 3.
\newblock In {\em Proceedings of Liverpool Singularities Symposium II}, pages
  142--163. Springer, 2006.

\bibitem{Mat}
Irena Matkovic.
\newblock Classification of tight contact structures on small seifert fibered
  l-spaces.
\newblock {\em Algebr. Geom. Topol}, 18(1):111--152, 2018.

\bibitem{Sim}
Jonathan Simone.
\newblock Tight contact structures on some plumbed 3-manifolds.
\newblock {\em arXiv preprint arXiv:1710.06702}, 2017.

\bibitem{Wan}
Andy Wand.
\newblock Tightness is preserved by legendrian surgery.
\newblock {\em Annals of Mathematics}, pages 723--738, 2015.

\bibitem{Wu}
Hao Wu.
\newblock {\em Tight contact structures on small Seifert spaces}.
\newblock PhD thesis, Massachusetts Institute of Technology, 2004.

\bibitem{Kan}
Kanda Yutaka.
\newblock The classification of tight contact structures on the 3-torus.
\newblock {\em Communications in Analysis and Geometry}, 5(3):413--438, 1997.

\end{thebibliography}
\vspace{10pt}

\author{Tanushree Shah} \\
\email{tanushrees@cmi.ac.in}\\
\address{Chennai Mathematical Institute}
\end{document}